\documentclass[11pt, reqno]{amsart}
\usepackage{mathrsfs}
\usepackage[active]{srcltx}
\usepackage{mathrsfs,amsmath}
\usepackage{mathtools}
\usepackage{longtable}
\usepackage{todonotes}
\usepackage{amssymb}
\usepackage{cite}
\allowdisplaybreaks

\usepackage{xcolor}
\definecolor{bluecite}{HTML}{0875b7}

\usepackage[unicode=true,
bookmarksopen={true},
pdffitwindow=true,
colorlinks=true,
linkcolor=bluecite,
citecolor=bluecite,
urlcolor=bluecite,
hyperfootnotes=false,
pdfstartview={FitH},
pdfpagemode= UseNone]{hyperref}


\newcommand{\ler}[1]{\left(#1\right)}
\newcommand{\dd}{\mathrm{d}}

\newcommand{\cW}{\mathcal{W}}
\newcommand{\HH}{\mathbb{H}}

\newcommand{\be}{\begin{equation}}
\newcommand{\ee}{\end{equation}}

\newcommand{\abs}[1]{\left|#1\right|}
\newcommand{\norm}[1]{\left|\left|#1\right|\right|}

\newcommand{\Wog}{\mathcal{W}_1(\mathbf{G})}

\newcommand{\Wphn}{\mathcal{W}_p(\HH^n)}
\newcommand{\Wohn}{\mathcal{W}_1(\HH^n)}
\newcommand{\dwo}{d_1}

\newcommand{\Wox}{\mathcal{W}_1(X)}

\newcommand{\supp}{\mathrm{supp}}

\newcommand{\isom}{\mathrm{Isom}}

\newcommand{\bG}{\mathbf{G}}
\newcommand{\R}{\mathbb{R}}

\usepackage[left=3.2cm,right=3.2cm,top=3.2cm,bottom=3.2cm]{geometry}

\newtheorem{theorem}{Theorem}[section]
\newtheorem{lemma}{Lemma}[section]
\newtheorem{corollary}{Corollary}[section]
\newtheorem{definition}{Definition}[section]
\newtheorem{remark}{Remark}[section]
\numberwithin{equation}{section}

\usepackage{bbm}  
\usepackage{dsfont}

\subjclass[]{ 
        46E27 
	49Q22 
        54E40 
}
\keywords{Isometries, isometric embeddings, isometric rigidity, horizontal strict convexity, Wasserstein Space, Heisenberg group, Carnot group, Hebisch-Sikora norm, Heisenberg-Korányi norm, Naor-Lee norm}

\title[Isometric rigidity of $\mathcal{W}_1(\mathbf{G})$ over Carnot groups]{Isometric rigidity of the Wasserstein space $\mathcal{W}_1(\mathbf{G})$ over Carnot groups}
\author[Zolt\'an M. Balogh]{Zolt\'an M. Balogh}
\address{Zolt\'an M. Balogh, Universit\"at Bern\\ Mathematisches Institut (MAI)\\ Sidlerstrasse 12\\ 3012 Bern\\ Schweiz}
\email{zoltan.balogh@unibe.ch}

\author[Tam\'as Titkos]{Tam\'as Titkos}
\address{Tam\'as Titkos, Corvinus University of Budapest, Department of Mathematics \\
Fővám tér 13-15 \\ Budapest 1093 \\ Hungary\\ and HUN-REN Alfr\'ed R\'enyi Institute of Mathematics\\ Re\'altanoda u. 13-15.\\
Budapest 1053\\ Hungary}
\email{titkos.tamas@renyi.hu}

\author[D\'aniel Virosztek]{D\'aniel Virosztek}
\address{D\'aniel Virosztek, HUN-REN Alfr\'ed R\'enyi Institute of Mathematics\\ Re\'altanoda u. 13-15.\\
Budapest 1053\\ Hungary}
\email{virosztek.daniel@renyi.hu}

\thanks{Z. M. Balogh is supported by the Swiss National Science Foundation, Grant Nr. {200020\_191978}.  }

 \thanks{T. Titkos is supported by the Hungarian National Research, Development and Innovation Office (NKFIH) under grant agreements no. K134944 and no. Excellence\_151232, and by the Momentum program of the Hungarian Academy of Sciences under grant agreement no. LP2021-15/2021.}

\thanks{D. Virosztek is supported by the Momentum program of the Hungarian Academy of Sciences under grant agreement no. LP2021-15/2021, by the Hungarian National Research, Development and Innovation Office (NKFIH) under grant agreement no. Excellence\_151232, and partially supported by the ERC Synergy Grant No. 810115.}

\thanks{Email address of the corresponding author (D\'aniel Virosztek): virosztek.daniel@renyi.hu}

\begin{document}
	\begin{abstract}
 This paper aims to study isometries of the $1$-Wasser\-stein space $\mathcal{W}_1(\mathbf{G})$ over Carnot groups endowed with horizontally strictly convex norms. Well-known examples of horizontally strictly convex norms on Carnot groups are the Heisenberg group $\HH^n$ endowed with the Heisenberg-Kor\'anyi norm, or with the Naor-Lee norm; and $H$-type Iwasawa groups endowed with a Kor\'anyi-type norm. We prove that on a general Carnot group there always exists a horizontally strictly convex norm. The main result of the paper says that if $(\mathbf{G},N_{\mathbf{G}})$ is a Carnot group where $N_{\mathbf{G}}$ is a horizontally strictly convex norm on $\mathbf{G}$, then the Wasserstein space $\mathcal{W}_1(\mathbf{G})$ is isometrically rigid. That is, for every isometry $\Phi:\mathcal{W}_1(\mathbf{G})\to\mathcal{W}_1(\mathbf{G})$ there exists an isometry $\psi:\bG\to\bG$ such that $\Phi=\psi_{\#}$.
 \end{abstract}
	\maketitle 
	\tableofcontents
	
 \section{Introduction: motivation and main results}\label{s1:intro}
As Hermann Weyl wrote in \cite{Weyl}, whenever a structure-endowed entity is given, one can expect a deep insight into the constitution of that entity by determining its group of automorphisms. In this spirit, there has been recent activity to understand the structure of isometries of various metric spaces of measures, see e.g. \cite{DKM, dolinar-molnar, kuiper, geher-titkos, molnar-levy}. One of the most intensively studied metric structures of measures nowadays is the so-called $p$-Wasserstein space $\mathcal{W}_p(X)$, where the underlying space $(X,\varrho)$ is a complete and separable metric space (see the precise definition later). The study of the geometric properties of these metric spaces is motivated by the theory of optimal mass transportation. Indeed, connections between the geometry of $X$ and $\mathcal{W}_2(X)$ have been investigated by Lott and Villani in the groundbreaking paper \cite{LV}. The pioneering papers of Bertrand and Kloeckner \cite{K, BK, BK2, K2} started to explore fundamental geometric features of $2$-Wasserstein spaces, including the description of complete geodesics and geodesic rays, determining their different type of ranks, and understanding the structure of their isometry group $\isom\big(\mathcal{W}_2(X)\big)$. 
Isometries and isometric embeddings of $p$-Wasserstein spaces have been intensively studied in recent years for various underlying spaces $X$ by Bertrand, Gehér, Kiss, Kloeckner, Santos-Rodriguez, and the authors; see \cite{BTV, BK, BK2, isemb-jmaa, GTV1, GTV2, TnSn, exotic, KT, K, K2, S-R}. 

An important feature of $p$-Wasserstein spaces is that the underlying space $X$ embeds isometrically into $\mathcal{W}_p(X)$, and similarly, the isometry group $\isom(X)$ of the underlying space $X$ embeds into $\isom\big(\mathcal{W}_p(X)\big)$ by push-forward of measures by isometries. In many cases, it is known that this embedding is surjective, i.e., $\isom(X)$ and $\isom\big(\mathcal{W}_p(X)\big)$ are isomorphic. In such a case we call the Wasserstein space isometrically rigid. The value of $p$ can play a distinctive role here. For example, in \cite{GTV1} it has been shown that $\mathcal{W}_p([0,1])$ is isometrically rigid if and only if $p\neq 1,$ and the results of \cite{K} and \cite{GTV1} together imply that $\mathcal{W}_p(\R)$ is isometrically rigid if and only if $p\neq 2.$ These examples show that the isometric rigidity of $\mathcal{W}_1(X)$ does not imply the rigidity of $\mathcal{W}_{p}(X)$ for $p>1,$ and the reverse implication is also not true in general. 
\par 
In our recent manuscript \cite{BTV}, we studied isometries and isometric embeddings of $\Wphn$, the $p$-Wasserstein space over the Heisenberg group endowed with the Heisenberg-Korányi distance. We found that $\Wphn$ is isometrically rigid if $p>1$. Our method was based on a description of complete geodesics and geodesic rays (isometric embeddings of $\mathbb{R}$ and $\mathbb{R_+}$ into $\Wphn$), which is not available if $p=1$. Our aim is to overcome this difficulty and find a method that can be applied not only in the case of the Heisenberg group but to a much larger class of spaces. The main result of this paper is 
the following.

\begin{theorem}\label{maintheorem}
Let $(\bG,N_{\bG})$ be a Carnot group where $N_{\bG}$ is a horizontally strictly convex norm on $\bG$. Then the Wasserstein space $\mathcal{W}_1(\bG)$ is isometrically rigid. That is, for every isometry $\Phi:\Wog\to\Wog$ there exists an isometry $\psi:\bG\to\bG$ such that $\Phi=\psi_{\#}$.
\end{theorem}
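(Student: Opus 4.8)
The plan is to prove isometric rigidity by reducing the problem to the behavior of $\Phi$ on the set of Dirac masses, which is an isometric copy of $\bG$ inside $\Wog$. The overall strategy follows the scheme that has proven effective for $1$-Wasserstein rigidity (as in \cite{GTV1}): first show that any isometry $\Phi$ preserves the set $\Delta = \{\delta_x : x \in \bG\}$ of Dirac measures; second, use the fact that $x \mapsto \delta_x$ is an isometric embedding of $(\bG, N_{\bG})$ into $\Wog$ to deduce that $\Phi$ restricts to an isometry $\psi$ of $\bG$; and third, show that $\Phi$ must then coincide with $\psi_{\#}$ on all of $\Wog$. The role of horizontal strict convexity is to guarantee that the geometry of $\bG$ is rigid enough to make the first step work, playing the part that strict convexity of the norm plays in Euclidean-type arguments.

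First I would establish a \emph{metric characterization of Dirac masses} that is preserved by any isometry. The natural candidate is to characterize $\delta_x$ among all measures via a midpoint or betweenness property: in $\Wog$, the Dirac masses should be exactly the measures that cannot be written as a nontrivial ``metric average'' in a suitable sense, or equivalently can be detected through the structure of geodesics emanating from them. Because $p=1$ allows branching geodesics and the usual displacement-interpolation rigidity is unavailable, the key is to find a purely metric property—likely phrased in terms of the collection of points realizing certain distance relations, or in terms of the extreme points of metric balls—that singles out $\Delta$. Here the horizontal strict convexity of $N_{\bG}$ enters decisively: it ensures that optimal couplings between Dirac masses and their perturbations are forced along horizontal geodesics in a controlled, essentially unique manner, so that the metric signature of a Dirac mass differs qualitatively from that of any spread-out measure.

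Second, once $\Phi(\Delta) = \Delta$ is known, the map $\psi : \bG \to \bG$ defined by $\Phi(\delta_x) = \delta_{\psi(x)}$ is well-defined, and since $d_1(\delta_x, \delta_y) = N_{\bG}(x^{-1}y)$ makes $x \mapsto \delta_x$ an isometric embedding, $\psi$ is an isometry of $(\bG, N_{\bG})$. One then forms the isometry $\psi_{\#}$ of $\Wog$ and considers the composition $\Phi^{-1} \circ \psi_{\#}$, which fixes every Dirac mass; the goal becomes to show that an isometry of $\Wog$ fixing all of $\Delta$ is the identity. This final step is typically handled by showing that a general measure $\mu$ is determined by its $d_1$-distances to all Dirac masses, i.e. by the function $x \mapsto d_1(\mu, \delta_x) = \int_{\bG} N_{\bG}(x^{-1}y)\,\dd\mu(y)$; recovering $\mu$ from this family of integrals is an injectivity statement about a transform (a potential-type or expected-distance transform) on $\bG$.

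I expect the \textbf{main obstacle} to be the first step—proving that $\Phi$ preserves Dirac masses—precisely because the failure of geodesic rigidity at $p=1$ removes the most convenient tool. The crux will be translating horizontal strict convexity into a clean, isometry-invariant metric statement that distinguishes Dirac masses. A promising route is to analyze metric segments and the uniqueness of geodesics between Dirac masses and nearby measures, showing that only Dirac masses sit at the ``tip'' of the cone of such geodesics; the horizontal strict convexity should force the optimal transport plan between $\delta_x$ and a measure close to it to be rigid along horizontal directions, yielding the distinguishing property. A secondary technical point will be the final injectivity of the expected-distance transform $\mu \mapsto \big(x \mapsto \int N_{\bG}(x^{-1}y)\,\dd\mu(y)\big)$; this should follow from the regularity and genericity properties of $N_{\bG}$, but care is needed because $N_{\bG}$ is only horizontally (not fully) strictly convex, so the argument must exploit the sub-Riemannian structure rather than classical convexity alone.
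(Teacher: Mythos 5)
Your three-step skeleton (Dirac masses are preserved; the induced map $\psi$ is an isometry of $\bG$; an isometry fixing all Dirac masses is the identity) coincides with the paper's outline, but both of the steps you yourself flag as delicate contain genuine gaps. For the first step, the candidate characterizations you float do not work: in $\Wog$ a Dirac mass is \emph{not} metrically indecomposable, because if $q$ lies on a horizontal line between $q_1$ and $q_2$, so that $d(q_1,q)+d(q,q_2)=d(q_1,q_2)$, then $\delta_q$ is a nontrivial metric-ratio point of $\delta_{q_1}$ and $\delta_{q_2}$; hence ``cannot be written as a nontrivial metric average'' fails to single out $\Delta(\bG)$, and the extreme-point-of-balls idea is never substantiated. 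The paper's actual device is different in kind: it characterizes certain \emph{pairs} rather than individual measures. Lemma \ref{KeyLemma} shows that $\mu,\nu$ are joined by a unique unit-speed geodesic if and only if $\mu-\nu=c\delta_q-c\delta_{q'}$ with $[q,q']=\{q,q'\}$, and that when $q\sim q'$ this geodesic is non-extendable if and only if the common part $\eta$ vanishes. Horizontal strict convexity enters through the clean equivalence that $q\sim q'$ iff $[q,q']=\{q,q'\}$ iff $q^{-1}\cdot q'$ is non-horizontal; consequently the $\Phi$-image of a pair $(\delta_q,\delta_{q'})$ with $q\sim q'$ is again such an extremal pair with $\eta=0$, i.e.\ a pair of Dirac masses, and since every $q$ admits a non-horizontal perturbation $q'$ with $q\sim q'$, the set $\Delta(\bG)$ is preserved. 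You gesture at ``uniqueness of geodesics'' and ``tips of cones,'' which is the right spirit, but without the pair-based lemma and the non-extendability criterion the argument does not close.

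For the final step, your route --- injectivity of the expected-distance transform $\mu\mapsto\big(x\mapsto\int_{\bG}N_{\bG}(x^{-1}\cdot y)\,\dd\mu(y)\big)$ --- is genuinely different from the paper's, and it is an unproven assertion rather than a consequence of ``regularity and genericity.'' Equality of the transforms of $\mu\neq\nu$ forces their energy distance to vanish, so injectivity is tied to the metric having strong negative type, a property that fails in very tame settings: on the circle with its geodesic metric, the uniform measure and the average of two antipodal Dirac masses have \emph{identical} distance functions to all Dirac masses, even though $\mathcal{W}_1$ of the circle is known to be isometrically rigid. So the transform approach cannot be generic, and whether it works for horizontally strictly convex homogeneous norms on Carnot groups would itself require a substantial proof, which you do not supply. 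The paper sidesteps this analytic question entirely: after reducing to the case where $\Phi$ fixes all Dirac masses, it shows that the set $\mathcal{F}_{\sim}(\bG)$ of finitely supported measures with pairwise $\sim$ support points is dense (via small non-horizontal perturbations), and proves by induction on the number of atoms that $\Phi$ fixes each element of $\mathcal{F}_{\sim}(\bG)$: a $(k+1)$-atom measure $\xi$ is realized as the unique element of a metric-ratio set $M_{\lambda}(\xi_1,\xi_2)$ for two $k$-atom measures satisfying condition (i) of Lemma \ref{KeyLemma}, and since $M_{\lambda}$ is defined purely metrically and is a singleton here, $\Phi(\xi)=\xi$. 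If you wish to salvage your transform idea you must actually prove its injectivity in this setting; otherwise the paper's metric induction is both more elementary and self-contained.
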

The definition of horizontally strictly convex norms was introduced in the setting of the Heisenberg group in \cite{BFS} in connection to isometric embeddings. This concept can easily be generalized to the setting of Carnot groups and it is done in the next section. We note here in advance that the horizontal strict convexity of a norm is a stronger property than the usual strict convexity, although the terminology may suggest it is weaker. Indeed, horizontal strict convexity of a norm means that the subadditivity of the norm is saturated only if the vectors involved are scalar multiples of each other \emph{and horizontal} --- see Definition \ref{def:hscn}. As an immediate consequence of this theorem, we get that the $1$-Wasserstein space over the Heisenberg group endowed by the Heisenberg-Kor\'anyi norm or the Naor-Lee norm is isometrically rigid. Here, we use the fact that these norms are horizontally strictly convex, as proven in \cite{BFS}.   Other examples where our theorem applies are $H$-type Iwasawa groups endowed with a Kor\'anyi type norm (see \cite{H-type}). 
The relevance of our result is underlined by the following existence theorem of horizontally strictly convex norms on general Carnot groups:
\begin{theorem}\label{existence-shc-norms}
Let $(\bG, \cdot)$ be a general Carnot group. Then there exists a horizontally strictly convex  norm $N_{\bG}: \bG \to [0, \infty)$. 
\end{theorem}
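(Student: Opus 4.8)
The plan is to produce the norm explicitly via the classical construction of Hebisch and Sikora, and then read off horizontal strict convexity from the fact that the resulting unit ball is a round Euclidean ball. Fix a basis of the Lie algebra $\mathfrak{g}$ adapted to the stratification $\mathfrak{g}=V_1\oplus\cdots\oplus V_s$ and use the exponential map to identify $\bG$ with $\R^N$, $N=\dim\mathfrak{g}$, in exponential coordinates of the first kind. Under this identification the identity is the origin, inversion is the linear map $X\mapsto -X$ (since $\exp(X)^{-1}=\exp(-X)$), the horizontal layer $V_1$ is a fixed linear subspace, and the dilations act by $\delta_\lambda(v_1+\cdots+v_s)=\lambda v_1+\cdots+\lambda^s v_s$ for $v_i\in V_i$. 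A horizontal direction is a nonzero $v\in V_1$, for which $\delta_\lambda$ restricts to ordinary scaling $\delta_\lambda(tv)=\lambda t\,v$.

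The first step is to invoke the Hebisch--Sikora theorem: for all sufficiently small $\varepsilon>0$ the homogeneous gauge $N_{\bG}(g)=\inf\{\lambda>0:\delta_{1/\lambda}g\in \bar B(0,\varepsilon)\}$, whose unit ball $\{N_{\bG}\le 1\}$ is exactly the Euclidean ball $\bar B(0,\varepsilon)\subset\R^N$, is smooth away from the origin and subadditive; that is, $N_{\bG}(\delta_\lambda g)=\lambda N_{\bG}(g)$, $N_{\bG}(g)=0$ iff $g=e$, and $N_{\bG}(gh)\le N_{\bG}(g)+N_{\bG}(h)$, so that $d(g,h)=N_{\bG}(g^{-1}h)$ is a left-invariant metric. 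Symmetry $N_{\bG}(g^{-1})=N_{\bG}(g)$ is immediate here: inversion is $X\mapsto -X$, which preserves $\bar B(0,\varepsilon)$ and commutes with every dilation, hence preserves the gauge. Thus $N_{\bG}$ is a genuine homogeneous norm on $\bG$ whose unit ball is a round Euclidean ball.

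It remains to verify horizontal strict convexity. The decisive feature of the constructed norm is that $\bar B(0,\varepsilon)$ is a strictly convex body: its boundary sphere contains no nontrivial line segment, and in particular no segment lying in a horizontal direction, so that any open segment joining two distinct boundary points lies in the interior of the ball. Translating this back through the definition of Section~\ref{s1:intro} (as generalized from the Heisenberg setting), this is precisely the assertion that $N_{\bG}$ is horizontally strictly convex. I expect the one delicate point to be the passage between the convex-geometric strict convexity of $\bar B(0,\varepsilon)$ and the metric formulation of horizontal strict convexity, since betweenness in $\bG$ is governed by the group law, and hence by the Baker--Campbell--Hausdorff formula, rather than by vector addition. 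This is resolved by noting that along the horizontal layer the group operation is abelian to leading order, so the relevant comparison of horizontal directions reduces to the ordinary, strictly convex, Euclidean geometry of the sphere $\partial B(0,\varepsilon)$, on which $\delta_\lambda$ acts as genuine scaling.
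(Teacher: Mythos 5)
You chose the same norm as the paper does (the Hebisch--Sikora gauge whose unit ball is a small Euclidean ball $\bar B(0,r)$), and your first two steps are sound. The gap is in your final step, and it sits exactly where the whole content of the theorem lies. Horizontal strict convexity concerns the equality case of the \emph{group} triangle inequality $N_{\bG}(q\cdot q')=N_{\bG}(q)+N_{\bG}(q')$, which after the homogeneity reduction becomes: for $\|p_1\|=\|p_2\|=r$ and $t\in(0,1)$, the point $\delta_t(p_1)\cdot\delta_{1-t}(p_2)$ lies in $\bar B(0,r)$, with equality only when $p_1=p_2=(x,0)$, $x \in \R^{n_1}$. This point is \emph{not} the Euclidean convex combination $tp_1+(1-t)p_2$: writing $p_i=(x_i,y_i)$ with $x_i$ in the first layer, the group product contributes BCH correction polynomials $P_1,P_2$, and strict convexity of the round ball says nothing about whether these corrections can restore equality. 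Ruling that out requires the quantitative structural bound $\|P_1(x_1,x_2)\|\le C_1\|x_1\|\,\|x_2\|\,\bigl\|\frac{x_1}{\|x_1\|}-\frac{x_2}{\|x_2\|}\bigr\|$ --- valid because the BCH polynomials are sums of terms containing factors $(x_ky_h-x_hy_k)$, so $P_1$ vanishes as the horizontal directions align at the same rate as the Euclidean convexity deficit in the identity $\|u+v\|^2=(\|u\|+\|v\|)^2-\|u\|\,\|v\|\,\bigl\|\frac{u}{\|u\|}-\frac{v}{\|v\|}\bigr\|^2$ --- together with a choice of $r$ small enough (the paper takes $r<\min\{1,\,2/(\sqrt{5}C_1),\,1/(6C_2)\}$) so that this deficit strictly dominates $\|P_1\|^2$ and the cross terms. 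Your phrase ``abelian to leading order'' is precisely the assertion that needs proving, and it would be insufficient as stated: a bound of the form $\|P_1\|\le C\|x_1\|\,\|x_2\|$ alone does not close the argument, because the convexity deficit itself degenerates quadratically as $x_1/\|x_1\|\to x_2/\|x_2\|$, so one needs $P_1$ to degenerate at the matching rate.

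Second, strict convexity of the ball cannot deliver the conclusion that equality points are \emph{horizontal}, i.e.\ that the higher-layer components vanish. That comes from the anisotropy of the dilations, not from ball geometry: on layers $j\ge 2$ one has $\|\delta_t(y)\|\le t^2\|y\|$, so the vertical part of $\delta_t(p_1)\cdot\delta_{1-t}(p_2)$ is contracted by roughly $t^2+(1-t)^2<1$ (plus $P_2$-errors, again controlled only for small $r$), and equality forces $y_1=y_2=0$. The degenerate case $p_1=p_2=p$ already shows your mechanism is the wrong one: in $\HH^1$ one computes $\delta_t(p)\cdot\delta_{1-t}(p)=(x,y,(t^2+(1-t)^2)z)$, so the norm strictly drops whenever $z\neq 0$, even though no question of segments on the sphere arises at all. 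Note finally that the smallness of $r$ in the Hebisch--Sikora theorem you invoke yields only subadditivity; the equality-case characterization is not part of their result, which is exactly why the paper re-runs their estimates with the sharper bounds above instead of citing the theorem as a black box. In short: right norm, right reduction, but the decisive step of your proposal is a restatement of the goal rather than a proof of it.
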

The paper is organized as follows: in Section 2 we fix notation and recall preliminaries on Wasserstein spaces and on Carnot groups. In Section 3 we prove the main result: Theorem \ref{maintheorem}, and in Section 4 we prove Theorem \ref{existence-shc-norms}.

 \section{Preliminaries on Wasserstein spaces and Carnot groups}
 
 We start with notations that will be used in the sequel, for more details and references on $p$-Wasserstein spaces we refer the reader to any of the following comprehensive textbooks \cite{AG, Figalli, Santambrogio, Villani, V}. In this paper, we will focus on the case $p=1$.
 
Let $(X,\varrho)$ be a complete and separable metric space. We denote by $\mathcal{M}^+(X)$ the set of all non-negative finite Borel measures on $X$. For all $r>0$ the symbol $\mathcal{M}_{\varrho}^r(X)$ stands for the set
\begin{equation*}
    \mathcal{M}_{\varrho}^r(X)= \Big\{ \mu\in\mathcal{M}^+(X)\,\Big|\,\mu(X)=r,~\int_X\varrho(x,\widehat{x})~\mathrm{d}\mu(x)<\infty~\mbox{for some}~\widehat{x}\in X.\Big\}
\end{equation*}
\begin{definition}
A non-negative Borel measure $\Pi$ on $X \times X$ is called a coupling for $\mu,\nu\in\mathcal{M}_{\varrho}^r(X)$ if the marginals of $\Pi$ are $\mu$ and $\nu$, that is, $$\Pi\ler{A \times X}=\mu(A)\qquad\mbox{and}\qquad\Pi\ler{X \times B}=\nu(B)$$ for all Borel sets $A,B\subseteq X$. The set of all couplings is denoted by $C(\mu,\nu)$. 
\end{definition}
For two non-negative finite measures with the same total weight $\mu,\nu\in\mathcal{M}_{\varrho}^r(X)$ we can define the distance function $d_1^{(r)}:\mathcal{M}_{\varrho}^r(X)\times\mathcal{M}_{\varrho}^r(X)\to\mathbb{R}_{\geq0}$ as follows:
\begin{equation}
 \label{eq:wasser_def}
d_1^{(r)}\ler{\mu, \nu}:=\inf_{\Pi \in C(\mu, \nu)} \iint_{X \times X} \varrho(x,y)~\dd \Pi(x,y).
\end{equation}

\begin{definition}
The $1$-Wasserstein space $\mathcal{W}_1(X)$ is the set $\mathcal{M}_{\varrho}^1(X)$ endowed with the so-called $1$-Wasserstein distance $d_1^{(1)}$.
\end{definition}

It is known (see e.g. Theorem 1.5 in \cite{AG} with $c=\varrho$) that the infimum in \eqref{eq:wasser_def} is, in fact, a minimum in this setting. Those couplings that minimize \eqref{eq:wasser_def} are called optimal couplings, or optimal transport plans. 

From now on, with a slight abuse of notation, we will write $d_1$ instead of $d_1^{(r)}$, no matter what the value of $r$ is. However, it is important to emphasize that if we write $d_1(\mu,\nu)$ for some non-negative finite measures $\mu$ and $\nu$, then it is always assumed that $\mu(X)=\nu(X)$. The advantage of this notation will be clear later in \eqref{transinv}.

Next, we highlight two special features of the $1$-Wasserstein metric. The first special feature is that there is a duality phenomenon, the so-called Kantorovich-Rubinstein duality. For given $\mu,\nu\in\Wox$ the $1$-Wasserstein distance can be computed as a supremum
 \begin{equation}\label{KR}
     d_1(\mu,\nu)=\sup_{\|f\|_L=1}\int_{X}f~d(\nu-\mu)
 \end{equation}
where $\|f\|_L$ denotes the Lipschitz norm of $f$, i.e. the smallest constant $C$ such that 
$$|f(q)-f(q')| \leq C\varrho(q,q') \qquad\mbox{for all}\quad q,q'\in X.$$
The second special feature is a consequence of the duality: the $1$-Wasserstein distance is translation invariant in the following sense:
\begin{equation}\label{transinv}
    d_1(\mu+\xi,\nu+\xi)=d_1(\mu,\nu).
\end{equation}
For more details see Theorem 4.4 in \cite{Edwards}.

As the terminology suggests, all notions introduced above are strongly related to the theory of optimal transportation. Indeed, for given sets $A$ and $B$ the quantity $\Pi(A\times B)$ is the weight of mass that is transported from $A$ to $B$ as $\mu$ is transported to $\nu$ along the transport plan $\Pi$.

The symbol $\supp\mu$ stands for the support of $\mu\in\mathcal{P}(X)$. The set of Dirac measures will be denoted by $\Delta(X)=\{\delta_x\,|\,x\in X\}$. For $\mu,\nu\in\Wox$ and $\lambda\in(0,1)$ let us denote $M_{\lambda}(\mu,\nu)$ the metric $\lambda$-ratio set
$$M_{\lambda}(\mu,\nu)=\big\{\xi\in\Wox\,\big|\,d_1(\mu,\xi)=\lambda d_1(\mu,\nu),~~d_1(\xi,\nu)=(1-\lambda) d_1(\mu,\nu)\big\}.$$

\begin{definition} Given two metric spaces $(Y,d_Y)$ and $(Z,d_Z)$, a map $f:Y\to Z$ is an isometric embedding if $d_Z(f(y),f(y'))=d_Y(y,y')$ for all $y,y'\in Y$. A self-map $\psi\colon Y\to Y$ is called an isometry if it is a surjective isometric embedding of $Y$ onto itself. The symbol $\isom(\cdot)$ will stand for the group of all isometries.
\end{definition}
For an isometry $\psi\in\isom(X)$ the induced push-forward map is
$$\psi_\# \colon \mathcal{P}(X)\to\mathcal{P}(X);\qquad\big(\psi_\#(\mu)\big)(A)=\mu(\psi^{-1}[A])$$ 
for all Borel sets $A\subseteq X$ and $\mu\in\mathcal{P}(X)$, where 
$\psi^{-1}[A]=\{x\in X\,|\, \psi(x)\in A\}.$ We call $\psi_\#(\mu)$ the \emph{push-forward} of $\mu$ with $\psi$.

It is important to remark that $\mathcal{W}_1(X)$ contains an isometric copy of $X$. Indeed, since $C(\delta_q,\delta_{q'})$ has only one element (the Dirac measure $\delta_{(q,q')})$, we have that \begin{equation*}d_1(\delta_q,\delta_{q'})=\iint_{X\times X}\varrho(u,v)~\mathrm{d}\delta_{(q,q')}(u,v)=\varrho(q,q'),
\end{equation*}
and thus the embedding
\begin{equation}
X\hookrightarrow\mathcal{W}_p(X);\qquad x\mapsto\delta_x
\end{equation}
is distance preserving. Furthermore, the set of finitely supported measures (in other words, the collection of all finite convex combinations of Dirac measures)
\begin{equation*}
    \mathcal{F}(X)=\Bigg\{\sum_{j=1}^k\lambda_j\delta_{x_j}\,\Bigg|\,k\in\mathbb{N},x_j\in X,\,\lambda_j\geq0\,(1\leq j\leq k),\,\sum_{j=1}^k\lambda_j=1\Bigg\}
\end{equation*}
is dense in $\mathcal{W}_1(X)$, see e.g. Example 6.3 and Theorem 6.18 in \cite{Villani}. Another important feature is that isometries of $X$ appear in $\isom(\mathcal{W}_p(X))$ by means of a natural group homomorphism.
\begin{definition} Let $\#$ denote the map which embeds $\isom(X)$ into $\isom(\mathcal{W}_p(X))$
\begin{equation} 
 \label{eq:hashtag}
\#\colon \, \mathrm{Isom} (X) \rightarrow \mathrm{Isom}  \ler{\mathcal{W}_p(X)}, \qquad \psi \mapsto \psi_\#.
\end{equation}
Isometries that belong to the image of $\#$ are called trivial isometries. If $\#$ surjective, i.e., if every isometry is trivial, then we say that $\mathcal{W}_p(X)$ is isometrically rigid.
\end{definition}
\begin{definition}
A geodesic segment (or shortly: geodesic) is a curve $\gamma: [a,b] \to \mathcal{W}_1(X)$ such that 
$$d_1(\gamma(t),\gamma(s)) = C|t-s|$$ for all $t,s \in [a,b]$. Note, that by reparametrising the curve $\gamma$ we can always achieve that $C=1$. Geodesics with $C=1$ will be called unit-speed geodesics.
\end{definition}

In the main result of this paper, the underlying metric space $X$ will be a Carnot group. There are several recent studies on the metric  structure  of these groups (see e.g. \cite{BTW, BKS, L}), as 
they serve as natural examples of sub-Riemannian geometries where the theory of optimal mass transport was recently developed \cite{BR,FR,R}.  In this paper, we refer to \cite{BLU} for background and we shall be using the same notations. A Carnot group $(\mathbf{G}, \cdot)$ of step $r$ is a connected,
simply connected, nilpotent Lie group whose Lie algebra $\mathfrak{g}$
of left-invariant vector fields admits a stratification, i.e. there
exist non-zero subspaces $\{V_j\}_1^r$ such that
$$
\mathfrak{g}=\bigoplus_{i=1}^r V_i,\qquad  [V_1,V_j]=V_{j+1}\not=0
\quad \texttt{\rm for}\  j=1,\ldots r-1,\qquad [V_1,V_r]=0.$$
We assume that a scalar product is given on $\mathfrak{g}$ for which the
subspaces $V_j$ are mutually orthogonal. The first layer $V_1$ of
the Lie algebra plays a key role: its elements are called {horizontal vectors}.
 We denote by $n_i$ the dimension of the vector space $V_i$.  Let $X=\{X_1,X_2,\ldots,X_n\}$
be an orthonormal basis of  $\mathfrak{g}$ such that for every $j$ (with
$1\le j\le r)$  the set
$$
\{X_{i};\ \texttt{\rm with}\ n_0+\ldots +n_{j-1}<i\le n_1+\ldots
+n_{j}\}
$$
is a basis for $V_j$ (here we put $n_0=0$ and $N_j=n_0+\ldots + n_j$
and hence $N_r=n$). 

The exponential map $\exp: \mathfrak{g}\to \mathbf{G}$ is defined by
$$\exp(x_1X_1+\ldots+x_nX_n)=(x_1,\ldots,x_n),$$ and it is
 a global diffeomorphism; we denote by
$\xi=(\xi_1,\xi_2,\ldots,\xi_r)$ the inverse of $\exp,$ where
$\xi_j:\bG\to V_j.$ This mapping defines the so-called coordinates of the first kind on $\bG$, 
that is identified as a point set with $\R^n$. The group $\bG$ is somewhat similar to the usual 
Euclidean space in the sense that it has a natural family of non--isotropic dilations and 
and the group law has polynomial expressions.
To be more precise, we note that the natural family of non-isotropic dilations for $\lambda>0$ on
$\mathfrak{g}$ associated with its grading is given by
$$\tilde\delta_\lambda(v_1+v_2+\ldots+v_r)=\lambda v_1+\lambda^2
v_2+\ldots+\lambda^r v_r,$$ if $v_i\in V_i,$ $1\le i\le r.$ By means of
the exponential map, one lifts these dilations to the family of the
automorphisms of $\bG\simeq \R^n $ $\delta_{\lambda}(x)=\exp( \tilde\delta_\lambda(
\xi(x))),$ i.e.
$$
\delta_{\lambda}(x)=(\lambda^{\sigma_1}x_1,\ldots,\lambda^{\sigma_n}x_n),
$$
with $x\in\R^n$ and $\lambda>0,$ where $\sigma_i=j$ for
$N_{j-1}<i\le N_j.$ 

The Baker--Campbell--Dynkin--Hausdorff formula gives the relation between the  bracket relations in the Lie algebra $\mathfrak{g}$ and  the expressions of  the group operations on
$\mathbf{G}.$ Taking into consideration the symmetries encoded in the bracket relations in $\mathfrak{g}$, the group law will also possess some useful symmetry relations (see \cite{HS}): According to this, we can write an element in $\mathbf{G}$ as  $p = (x,y)$ where $x\in \R^{n_1} $ and $y \in \R^{n_2 + \ldots + n_s}$. In this notation the group operation in $\mathbf{G}$ can be written as 
$$ (x_1,y_1) \cdot (x_2, y_2) = (x_1 +x_2, y_1 + y_2 + P_1(x_1, x_2) + P_2((x_1, y_1), (x_2, y_2)),$$
where $P_1: \R^{n_1} \times \R^{n_1} \to \R^{n_2+ \ldots + n_s}$ and $P_2: \R^{n} \times \R^n \to \R^{n_2+ \ldots + n_s}$ are mappings with polynomial components in the variables $x_1, x_2$ resp. $(x_1,y_1), (x_2, y_2)$. We
denote by $e=0\in\R^n$ the null element in $\bG$ and note that the inverse of the element $q=(x_1,\dots,x_n)$ is simply $q^{-1}=-q=(-x_1, \dots, -x_n).$
\par

\color{black}

In what follows we turn to the metric structure of our Carnot group. We consider left-invariant homogeneous metrics on $\mathbf{G}$ defined as follows. The first step is to define a norm on $\mathbf{G}$ that is compatible with the group's law:

\begin{definition} \label{norm}
Let $(\bG,\cdot)$ be a Carnot group with neutral element $e$. We say that a map $N: \bG \to\mathbb{R}_{\geq0}$ is a \emph{norm} on $\bG$ if it satisfies
\begin{align}
&i)\ N(q)=0\Leftrightarrow q=e,\ \forall q\in \bG,\nonumber\\
&ii)\ N(q^{-1})=N(q),\ \forall q\in \bG,\nonumber\\
&iii)\ N(q\cdot q')\leq N(q)+N(q'),\ \forall q,q'\in \bG.\nonumber
\end{align}
\end{definition}

\begin{definition}
Let $(\bG,\  \cdot)$ be a Carnot group. A metric $d:\bG\times \bG\to\mathbb{R}_{\geq0}$
is called \emph{left-invariant}, if for every $q_{o}\in \bG$, the map $L_{q_{o}}:(\bG,d)\to (\bG,d),\ q\mapsto q_{o}\cdot q$ is an isometry, that is, $d(q_{o}\cdot q,q_{o}\cdot q')=d(q,q')$, for all $q,q'\in \bG$.
\end{definition}
Every norm $N:\bG\to\mathbb{R}_{\geq0}$ induces
a left-invariant metric $d_{N}:\bG\times \bG\to\mathbb{R}_{\geq0}$, by the formula $d_N(q,q') := N(q^{-1}\cdot q')$.
Conversely,  given a left-invariant metric $d: \bG \times \bG \to \R_{+}$, the map $N(q)= d(q, e)$ defines a norm on $\bG$. 

\begin{definition}
A norm $N:\bG \to\mathbb{R}_{\geq0}$ on a Carnot group is called \emph{homogeneous} if
\begin{align}
\ N(\delta_{\lambda}(p))=\lambda N(p),\ \text{for all }\lambda>0,\ \text{for all } p\in\mathbf{G}.\nonumber
\end{align}
\nonumber
\end{definition}

It is easy to see that a norm $N$ on $\bG$ is homogeneous if and only if its associated left-invariant metric is homogeneous in the sense that $d_N(\delta_{\lambda}(p),\delta_{\lambda}(q))=\lambda d_N (p,q)$. 
Every left-invariant distance on $\bG$ induced by a homogeneous norm is a homogeneous distance.
From now on, we will use the expression \emph{homogeneous distance on $\bG$} to talk about the left-invariant metric induced by a homogeneous norm. The topology induced by any homogeneous distance on $\bG$ coincides with the Euclidean topology on the base space $\R^n$.

In fact, once the homogeneous distances are known to be continuous with respect to the standard topology on 
$\R^n$ one can show by a standard argument the even stronger fact that all such norms are bi-Lipschitz equivalent. On the other hand, the metric structure induced by a homogeneous norm $N$ on $\bG$ is very different from $\R^n$ endowed with the Euclidean distance $d_{E}$. The two distances $d_N$ and $d_{E}$ are not bi-Lipschitz equivalent for any choice of homogeneous norm $N$ on $\bG$. For a comparison of the Hausdorff dimension of sets with respect to these two norms we refer to \cite{BTW}.

The simplest non-trivial Carnot group of step 2 is the Heisenberg group $\mathbb{H}^n \simeq \R^{2n+1} $
where the group operation is given by 
$$ q\cdot q'  = (x,y,z)\cdot (x',y', z')= \Big(x+x', y+y',
z+z'+2\sum_{i=1}^n (x'_iy_i-x_iy'_i)\Big), $$   and the action of the non-isotropic dilations is  defined as 
$\delta_{\lambda}:\mathbb{ H}^n
\to \mathbb{H}^n$, $\lambda>0$ $$\delta_{\lambda}(x,y,z)= (\lambda x, \lambda y,\lambda^2 z).$$

The left-invariant, homogeneous Heisenberg-Kor\'anyi metric $d_H$ is
defined by the homogeneous Kor\'anyi  norm on $\mathbb{H}^n$  
$$\|(x,y,z)\|_K =
\Big(\big(\sum_{i=1}^n(x^2_i+y^2_i)\big)^2 + z^2\Big)^{\frac{1}{4}}.  $$
  Another interesting norm on $\mathbb{H}^n$ is the norm $N_{LN}: \mathbb{H}^n \to \R_{+}$ introduced by Lee and Naor in \cite{LN} and is given by: 
 $$ N_{LN}(x,y,z)= \sqrt{||(x,y,z)||_K^2 + ||(x,y)||_E^2},$$
 where $||\cdot||_E$ is the usual Euclidean norm. We leave it as an exercise for the interested reader to verify that the above formula satisfies the conditions of Definition \ref{norm}. We refer to \cite{BFS} for more details, and many additional examples of homogeneous norms defined on the Heisenberg group. In that paper, the class of {\it horizontally strictly convex} norms has been introduced in the setting of the Heisenberg group. We can define this property in more general Carnot groups as follows:

\begin{definition} \label{def:hscn}
A homogeneous norm $N: \bG \to \R_{+}$ is horizontally strictly convex if and only if
$$q,q' \in \bG , q, q' \neq e, \ \ N(q\cdot q') = N(q) +N(q') \ \ \implies$$
 $q$ and $q'$ lie on the same horizontal line through the origin, i.e., there exists $z\in \R^{n_1}$  such that 
 $$ q=(sz,0), q'=(s'z, 0), \ \ \text{ for some}  \ s, s'\in \R.$$

\end{definition}

It is easy to check that a homogenous norm $N$ on $\bG$ is horizontally strictly convex if and only if for all $q_1, q_2, q \in \bG$ with $q_1 \neq q$ and $q_2 \neq q$ the equality 
$$ d_N(q_1, q_2) = d_N(q, q_1) + d_N(q, q_2) $$
implies that $q_1, q_2$ belong to the same horizontal line through $q$:
$$ l= \{ q\cdot (sz, 0): s \in \R \}, $$
for some non-zero $z\in \R^{n_1}$.

It was shown in \cite{BFS} that the Kor\'anyi norm and the Lee-Naor norm in the Heisenberg group are horizontally strictly convex. On more general H-type Iwasava groups of step two one can still define a norm of Kor\'anyi type that is horizontally strictly convex \cite{H-type}.
On the other hand, we note, that many homogeneous norms on Carnot groups are {\it not} horizontally strictly convex. A prominent example is the Carnot-Carath\'eodory norm coming from the sub-Riemannian Carnot-Carath\'odory metric.  Here {\it any pair of points} can be joined by a geodesic and therefore there are many points saturating the triangle inequality for any given pair of points. The same situation occurs in the case of more general sub-Finsler metrics \cite{BFS}. 

 In the following, we consider another class of norms on the Heisenberg group $\HH^n$ that are \emph{not} horizontally strictly convex. Let $1\leq p \leq \infty$ and $0<a\leq n^{-1/2},$ and consider the function

$$
N_{p,a}: \, \HH^n \rightarrow [0, \infty); \, (x,y,z) \mapsto \max \left\{ \norm{(x,y)}_{p}, a \sqrt{\abs{z}}\right\} 
$$
where $\norm{.}_{p}$ denotes the $l^p$ norm on $\R^{2n}.$ Then $N_{p,a}$ is a homogeneous norm on $\HH^n$ --- see \cite[Example 5.4]{BFS}. Now we check that for any $1 \leq p \leq \infty$ this norm is not horizontally strictly convex.
Indeed, for $|z| < 1/a$ consider the two points $q=(1,0,\ldots ,0,z), \, q'=(1,0,\ldots ,0, -z) \in \HH^n.$  Then
$$
2= N_{p,a}\ler{q \cdot q'}=N_{p,a}\ler{q}+N_{p,a}\ler{q'}
$$
but clearly $q$ and $q'$ do not lie on the same horizontal line, and thus $N_{p,a}$ is not horizontally strictly convex.
\par
\color{black} 

Motivated by the statement of Theorem \ref{maintheorem} we intend to find horizontally strictly convex norms in general Carnot groups. In this general setting, this could be a non-trivial matter. However, a prominent candidate seems to be the class of norms introduced by Hebisch and Sikora in \cite{HS}.  Let us recall, that the  Hebisch-Sikora norm on $N_{HS}: \bG \to \R_{+}$ is defined as:
\begin{equation} \label{HS}
N_{HS}(q) := \inf \{ t>0: \delta_{\frac{1}{t}}  (q) \in B(0,r) \},
\end{equation}
where $B(0,r) \subseteq \R^n$ is the usual Euclidean ball in $\R^n$ centered at the origin, with radius $r$. 

In \cite{HS} Hebisch and Sikora proved, that there exists $r_0>0$ such that for all $0<r<r_0$ the function $N_{HS}$ from \eqref{HS} is a homogeneous norm. One can verify by a direct calculation that the above Lee-Naor norm is a particular instance of the Hebisch-Sikora norms in the setting of the Heisenberg group.

 The Hebisch-Sikora norm $N_{HS}$ was already proven to be useful in several studies: Lee and Naor \cite{LN} used $N_{LN}$ on the Heisenberg group to give a counterexample to the Goemans-Linial conjecture.  Le Donne and Rigot proved that in the setting of the Heisenberg group (see \cite{LR1}) and also in the setting of Carnot groups of step two (see \cite{LR2}) the Hebisch-Sikora norm $N_{HS}$ satisfies the so-called Besicovitch covering property used in differentiation of measures. In a recent paper \cite{L} Li used the norm $N_{HS}$ in connection to the analyst's traveling salesman problem in Carnot groups. As we shall see in Theorem \ref{T:HSCN}: the Hebisch-Sikora norm is horizontally strictly convex, verifying the statement of Theorem \ref{existence-shc-norms}.


\section{Isometric rigidity of $\Wog$: Proof of Theorem \ref{maintheorem}}
First, let us introduce some notations. Let $(X,\varrho)$ be a general metric space.  For $q,q'\in X$ the symbol $[q,q']$ stands for the metric segment between $q$ and $q'$
$$[q,q']=\{r\in X\,|\,\varrho(q,r)+\varrho(r,q')=\varrho(q,q')\}.$$
The property that $[q,q']=\{q,q'\}$ i.e. that the interior of the metric segment $[q,q']$ is empty will be important in the sequel. In fact, in the proof of Theorem \ref{maintheorem} we need an even stronger property: we will use the symbol $q\sim q'$ for the relation
\begin{equation} \label{eq:sim-def}
    q\sim q' \quad\Longleftrightarrow\quad  
     \begin{cases}
       \forall r\in X:~~\varrho(q,q')=\varrho(q,r)+\varrho(r,q')~~\Longrightarrow~~ r\in\{q,q'\} \\
       \forall r\in X:~~\varrho(r,q)=\varrho(r,q')+\varrho(q',q)~~\Longrightarrow~~ r\in\{q,q'\}\\
       \forall r\in X:~~\varrho(q',r)=\varrho(q',q)+\varrho(q,r) ~~\Longrightarrow~~ r\in\{q,q'\}\\
     \end{cases}
\end{equation}
Observe that if $X$ has at least two points then $q\sim q'$ implies $q\neq q'$. And obviously, $q\sim q'$ implies $[q,q']=\{q,q'\}$.

Our aim in the next Lemma is to characterize those pairs of measures such that there is only one unit-speed geodesic connecting them in $\Wox$, namely the linear interpolation. Although this might be known, or even folklore, we did not find it in the literature, so we include it for the sake of completeness.  
\begin{lemma}\label{KeyLemma}
Let $(X,\varrho)$ be a complete and separable metric space, and let $\mu$ and $\nu$ be two different elements of $\Wox$ with $d_1(\mu,\nu):=T>0$. Then the following statements are equivalent.
\begin{itemize}
\item[(i)] The signed measure $\mu-\nu$ can be written as 
    \begin{equation*}
    \mu-\nu=c\delta_q-c\delta_{q'}
    \end{equation*}
    for some positive real number $c$ and $q,q'\in X$ such that $[q,q']=\{q,q'\}$. In other words, $\mu$ and $\nu$ can be written as
    \begin{equation*}
\mu=\eta+c\delta_q\qquad\mbox{and}\qquad\nu=\eta+c\delta_{q'}
    \end{equation*}
    for some non-negative measure $\eta$, $c>0$, and $q,q'\in X$ such that $[q,q']=\{q,q'\}$.  \item[(ii)] For every $\lambda\in(0,1)$ the metric $\lambda$-ratio set is a singleton, that is,
    $$M_{\lambda}(\mu,\nu)=\{(1-\lambda)\mu+\lambda\nu\}.$$
    \item[(iii)] There is only one unit-speed geodesic $\gamma:[0,T]\to\Wox$ connecting $\mu$ and $\nu$ ($\gamma(0)=\mu$, $\gamma(T)=\nu$). That geodesic is the linear interpolation
    \begin{equation}\label{EuclideanAverage}
     \gamma(t)=\Bigg(1-\frac{t}{T}\Bigg)\mu+\frac{t}{T}\nu\qquad(t\in[0,T]).
    \end{equation}

\end{itemize}
Moreover, if (i) holds with the stronger assumption $q\sim q'$, then the unique unit-speed geodesic segment between $\mu$ and $\nu$ cannot be extended in any direction if and only if $\eta=0$.
\end{lemma}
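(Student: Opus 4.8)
The plan is to prove the cycle $(i)\Rightarrow(ii)\Leftrightarrow(iii)$ together with $\neg(i)\Rightarrow\neg(ii)$, and then to treat the prolongation claim separately. The pivot is the elementary remark, read off from Kantorovich--Rubinstein duality \eqref{KR}, that the linear interpolation \eqref{EuclideanAverage} is \emph{always} a unit-speed geodesic: writing $\mu_s:=(1-s)\mu+s\nu$ one has $\mu_s-\mu_t=(t-s)(\mu-\nu)$, so \eqref{KR} gives $d_1(\mu_s,\mu_t)=(t-s)d_1(\mu,\nu)$ for $t>s$. With this in hand $(ii)\Leftrightarrow(iii)$ is soft: if $\gamma$ is any unit-speed geodesic joining $\mu$ and $\nu$ then $\gamma(\lambda T)\in M_\lambda(\mu,\nu)$, so $(ii)$ forces $\gamma$ to coincide with the linear interpolation $\gamma_1$; conversely, for $\xi\in M_\lambda(\mu,\nu)$ the concatenation of the linear interpolations of $[\mu,\xi]$ and $[\xi,\nu]$ is a unit-speed geodesic from $\mu$ to $\nu$ (its two lengths sum to $d_1(\mu,\nu)$), so $(iii)$ forces $\xi=\gamma_1(\lambda T)=(1-\lambda)\mu+\lambda\nu$.

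The heart of the matter is $(i)\Rightarrow(ii)$. Writing $\mu=\eta+c\delta_q$, $\nu=\eta+c\delta_{q'}$, translation invariance \eqref{transinv} gives $T=c\varrho(q,q')$, and I would first show that \emph{every} optimal plan $\Pi$ for $(\mu,\nu)$ leaves $\eta$ fixed and carries exactly the mass $c$ from $q$ to $q'$. Indeed $\varrho(\cdot,q')$ is (up to sign) an optimal Kantorovich potential in \eqref{KR}, so complementary slackness forces $\varrho(x,q')=\varrho(x,w)+\varrho(w,q')$, i.e. $w\in[x,q']$, for $\Pi$-a.e. $(x,w)$. Decomposing $\Pi=\Pi_{\mathrm{diag}}+\Pi_{\mathrm{off}}$ into its diagonal and off-diagonal parts, the diagonal marginals coincide, whence the off-diagonal marginals satisfy $\mu_{\mathrm{off}}-\nu_{\mathrm{off}}=\mu-\nu=c\delta_q-c\delta_{q'}$ and in particular $\mu_{\mathrm{off}}(\{q\})\ge c$. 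Here the hypothesis $[q,q']=\{q,q'\}$ is decisive: for a source at $q$ the potential condition puts the target in $[q,q']=\{q,q'\}$, hence (being off-diagonal) at $q'$, so \emph{all} the mass $\mu_{\mathrm{off}}(\{q\})$ travels to $q'$ and already accounts for cost $\mu_{\mathrm{off}}(\{q\})\varrho(q,q')\ge c\varrho(q,q')=T$. Since the total cost equals $T$, we conclude $\mu_{\mathrm{off}}(\{q\})=c$ and that no further mass moves, i.e. $\Pi_{\mathrm{off}}=c\delta_{(q,q')}$. Gluing optimal plans for $(\mu,\xi)$ and $(\xi,\nu)$ produces an optimal plan for $(\mu,\nu)$ of exactly this form and a three-point measure whose middle marginal is $\xi$; as the middle point lies on $[x,w]$, the moving mass $c$ is split between $q$ and $q'$ only, giving $\xi=\eta+c_q\delta_q+c_{q'}\delta_{q'}$, and the constraint $d_1(\mu,\xi)=\lambda T$ fixes $c_{q'}=\lambda c$, i.e. $\xi=(1-\lambda)\mu+\lambda\nu$. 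I expect the delicate point to be making this ``no extra mass moves'' step rigorous for non-atomic $\eta$ and for an arbitrary, possibly non-unique, optimal plan; the cost count above is arranged precisely to sidestep cyclical-monotonicity bookkeeping.

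For the reverse implication I would prove $\neg(i)\Rightarrow\neg(ii)$ by producing a second element of some ratio set, starting from the Jordan decomposition $\mu=\eta+P$, $\nu=\eta+M$ with $P\perp M$ and $P(X)=M(X)=c$. If $P=c\delta_q$ and $M=c\delta_{q'}$ but $[q,q']\neq\{q,q'\}$, choose $r\in[q,q']\setminus\{q,q'\}$; then $\eta+c\delta_r\in M_\lambda(\mu,\nu)$ for $\lambda=\varrho(q,r)/\varrho(q,q')$, and it charges $r$, hence differs from $(1-\lambda)\mu+\lambda\nu$. Otherwise (after possibly interchanging $\mu$ and $\nu$) $P$ is not a single Dirac; fixing an optimal plan $\Pi_{PM}$ from $P$ to $M$ and a Borel set $A$ with $0<P(A)<c$, I keep the $A$-sourced mass in place and move the rest to its targets, obtaining $\xi_2=\eta+P|_A+\big(\text{second marginal of }\Pi_{PM}|_{A^c\times X}\big)\in M_{\lambda_0}(\mu,\nu)$ with $\lambda_0=\mathrm{cost}(\Pi_{PM}|_{A^c\times X})/T\in(0,1)$; since $P\perp M$, $\xi_2$ and $(1-\lambda_0)\mu+\lambda_0\nu$ already differ on $A\setminus\supp M$, where their $P$-masses are $P|_A$ and $(1-\lambda_0)P|_A$.

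Finally I turn to the prolongation statement, assuming $(i)$ with $q\sim q'$. If $\eta\neq0$ the geodesic extends: when $\supp\eta\not\subseteq\{q'\}$ I transport a small portion of $\eta$ sitting at positive distance from $q'$ onto $q'$, producing $\nu'$ with $d_1(\mu,\nu')=T+d_1(\nu,\nu')$ (the lower bound from \eqref{KR} tested against $\varrho(\cdot,q')$ meets the obvious upper bound), so concatenation prolongs $\gamma$ past $\nu$; when $\supp\eta\subseteq\{q'\}$ I instead move a little mass from $q'$ to $q$ and prolong before $\mu$. If $\eta=0$, so $\mu=c\delta_q$ and $\nu=c\delta_{q'}$, inextensibility comes from $q\sim q'$: a prolongation past $\nu$ would yield $\nu'\neq\nu$ with $\nu$ on a metric segment from $\mu$ to $\nu'$; since the relevant first marginals are Diracs the intermediate plans are unique, and gluing shows every $w\in\supp\nu'$ satisfies $\varrho(w,q)=\varrho(w,q')+\varrho(q',q)$, so the second clause of $q\sim q'$ gives $\supp\nu'\subseteq\{q,q'\}$; writing $\nu'=\zeta\delta_q+(c-\zeta)\delta_{q'}$, the equalities $d_1(\nu,\nu')=\zeta\varrho(q,q')$ and $d_1(\mu,\nu')=(c-\zeta)\varrho(q,q')=T+d_1(\nu,\nu')$ force $\zeta=0$, contradicting $\nu'\neq\nu$. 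Prolongation before $\mu$ is ruled out symmetrically by the third clause of $q\sim q'$. The main obstacle throughout is the measure-theoretic care needed when $\eta$ is non-atomic and when optimal plans fail to be unique.
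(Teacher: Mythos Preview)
Your proof is correct and follows essentially the same path as the paper's: a Kantorovich--Rubinstein potential together with the gluing lemma for $(i)\Rightarrow(ii)$, explicit competing intermediate measures for $\neg(i)$, and the three clauses of $q\sim q'$ for the prolongation statement; the differences (doing $(ii)\Leftrightarrow(iii)$ directly, the diagonal/off-diagonal split of an optimal plan, and extending past $\nu$ or before $\mu$ according to $\supp\eta$ rather than uniformly toward $\delta_{q'}$) are purely organisational. One small slip: in Case~2 of $\neg(i)\Rightarrow\neg(ii)$, mutual singularity does not guarantee $P(A\setminus\supp M)>0$ (the topological support of $M$ may well contain that of $P$), so compare the two candidates on $A\cap S_P$, where $S_P$ is a Borel carrier of $P$ on which $M$ vanishes, rather than on $A\setminus\supp M$; with that adjustment your comparison goes through verbatim.
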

\begin{proof}
    To prove (i)$\Longrightarrow$(ii) assume that $\mu=\eta+c\delta_q$, $\nu=\eta+c\delta_{q'}$, and $[q,q']=\{q,q'\}$. If $f$ is any $1$-Lipschitz function (i.e., $|f(v)-f(u)|\leq\varrho(v,u)$ for all $u,v\in X$) then for any $\Pi\in C(\mu,\nu)$ we have
\begin{equation*}
\begin{split}
    \iint_{X\times X} f(v)-f(u)~\mathrm{d}\Pi(u,v)&\leq\iint_{X\times X} |f(v)-f(u)|~\mathrm{d}\Pi(u,v)\\
    &\leq\iint_{X\times X} \varrho(v,u)~\mathrm{d}\Pi(u,v).
    \end{split}
\end{equation*}
The left-hand side of the above inequality can be written as
\begin{equation}
\begin{split}
\iint_{X\times X} f(v)-f(u)~\mathrm{d}\Pi(u,v)&=\int_X f(v)~\mathrm{d}\nu(v)-\int_X f(u)~\mathrm{d}\mu(u)\\
&=\int_X f~d(\nu-\mu),
\end{split}
\end{equation}
so we see that if a function $f$ solves the dual side and $\Pi$ solves the primal side of the Kantorovich-Rubinstein duality then
\begin{equation}\label{supp1}
    f(v)-f(u)=\varrho(v,u)\qquad\mbox{for}~\Pi\mbox{-almost every pair}~(u,v).
\end{equation}
According to the assumption, $\nu-\mu=c(\delta_{q'}-\delta_q)$, so $f_q(z):=\varrho(q,z)$ solves the dual problem \ref{KR}. Indeed, $\int_X f \dd (\nu-\mu)=c\ler{f(q')-f(q)}$ for any (Borel measurable) function $f: X \to \R,$ and if $f$ is furthermore $1$-Lipschitz, that is, $\norm{f}_{L}=1,$ then $c\ler{f(q')-f(q)} \leq c \varrho(q,q').$ On the other hand, for the above-defined function $f_q$ we clearly have $\int_X f_q \dd (\nu-\mu)=c \varrho(q,q').$ Consequently, for any optimal coupling $\Pi\in C(\mu,\nu)$ we know that if  $(w,q')\in\supp\Pi$ then
\begin{equation}\label{supp2}
   \varrho(q',w)=f_q(q')-f_q(w)=\varrho(q,q')-\varrho(q,w),
\end{equation}
or equivalently, $\varrho(q,q')=\varrho(q,w)+\varrho(q',w)$. Since $[q,q']=\{q,q'\}$, this implies that $w\in\{q,q'\}$. The assumption $\nu-\mu=c\ler{\delta_{q'}-\delta_{q}}$ implies that the mass to be transported from $X\setminus \{q'\}$ to $\{q'\}$ by the optimal transport plan $\Pi$ is at least $c.$ That is, $\Pi\ler{\ler{X\setminus \{q'\}} \times \{q'\}} \geq c,$ which in turn implies that $\Pi\ler{\{q\} \times \{q'\}}=\Pi\ler{\{(q,q')\}} \geq c.$
However, by translation invariance we have $d_1(\mu, \nu) = c \varrho(q,q')$ and hence 
\begin{equation} \label{eq:transl-inv-cons}
c \varrho(q,q')=
d_1(\mu,\nu)=\iint_{X\times X}\varrho(u,v)~\mathrm{d}\Pi(u,v)\geq \Pi\ler{\{(q,q')\}} \varrho(q,q')
\end{equation}
which implies that $\Pi\ler{\{(q,q')\}} \leq c$ and consequently $\Pi\ler{\{(q,q')\}}=c.$ Moreover, \eqref{eq:transl-inv-cons} implies that 
$$
\iint_{\ler{X \times X} \setminus \{(q,q')\}} \varrho(u,v)~\mathrm{d}\Pi(u,v)=0. 
$$
Therefore, we conclude that there is only one optimal transport plan: the one which moves mass of magnitude $c$ from $q$ to $q'$ and leaves everything else intact. In mathematical terms, the only optimal coupling $\Pi_*$ is given by
\begin{equation}\label{Pistar}
\Pi_*:=c\delta_{(q,q')}+(\mathrm{Id}\times \mathrm{Id})_{\#}\eta.
\end{equation}

Now fix a $\lambda\in(0,1)$ and take a $\xi\in M_{\lambda}(\mu,\nu)$. We have to show that $\xi=(1-\lambda)\mu+\lambda\nu$.
Since $d_1(\mu,\xi)+d_1(\xi,\nu)=d_1(\mu,\nu)$, we know on the one hand that 
if $\Pi_{\mu,\xi}\in C(\mu,\xi)$ and $\Pi_{\xi,\nu}\in C(\xi,\nu)$ are optimal couplings, then gluing them together we get an optimal coupling between $\mu$ and $\nu$, and that optimal coupling must be $\Pi_*$. On the other hand,
\begin{equation}
\begin{split}
        d_1(\mu,\nu)&=\int_X f_q(z)~d(\nu-\mu)(z)\\
        &=\int_X f_q(z)~d([\nu-\xi]+[\xi-\mu])(z)\\
        &=\int_X f_q(z)~d(\nu-\xi)(z)+\int_X f_q(z)~d(\xi-\mu)(z)\\
        &\leq d_1(\nu,\xi)+d_1(\xi,\nu)\\
        &=d_1(\mu,\nu),
\end{split}
\end{equation}
which means that $f_q(z)$ solves the dual problem for both pairs $(\nu,\xi)$ and $(\xi,\mu)$. The same calculation as in \eqref{supp1}-\eqref{supp2} ensures that if $(w,q')\in\supp\Pi_{\xi,\nu}$ then $w\in\{q,q'\}$. Combining these two facts we get that $\xi$ can be written for some $0\leq\alpha\leq c$ as \begin{equation}\label{xi}
\xi=\eta+\alpha\delta_q+(c-\alpha)\delta_{q'}.
\end{equation}
Again, using the translation invariance of the distance, we have

\begin{equation}
\begin{split}
d_1(\mu,\xi)&=d_1(\eta+c\delta_q,\eta+\alpha\delta_q+(c-\alpha)\delta_{q'})\\
&=d_1(c\delta_q,\alpha\delta_q+(c-\alpha)\delta_{q'})=(c-\alpha)\varrho(q,q').
\end{split}
\end{equation}
This implies that
\begin{equation}(1-\lambda)c\varrho(q,q')=(1-\lambda)d_1(\mu,\nu)=(c-\alpha)\varrho(q,q'),
\end{equation}
or equivalently, $\alpha=\lambda c$. Now we see from \eqref{xi} that
$$\xi=\eta+\lambda c\delta_q+(c-\lambda c)\delta_{q'}=(1-\lambda)\mu+\lambda\nu.$$\\

To prove (ii)$\Longrightarrow$(iii), assume by contradiction that there exist at least two different unit-speed geodesics $\gamma_1$ and $\gamma_2$, connecting $\mu$ and $\nu$. Since the two curves are not identical, there exists at least one $t\in(0,T)$ such that $\gamma_1(t)\neq\gamma_2(t)$. For that $t\in(0,T)$ we have $\{\gamma_1(t),\gamma_2(t)\}\subseteq M_{1-\frac{t}{T}}(\mu,\nu)$, a contradiction.\\

To prove (iii)$\Longrightarrow$(i), assume by contradiction  that (i) does not hold. This is the case if either (a) $\mu-\nu=c\delta_q -c\delta_{q'}$ for some $q,q'\in X$ and $c>0$, but $[q,q']\neq\{q,q'\}$; or (b) $\mu-\nu\neq c\delta_q-c\delta_{q'}$ for any $q,q'\in X$ and $c>0$.

Let us handle case (a) first. Since $\mu-\nu=c\delta_q -c\delta_{q'}$ by assumption, we can write $\mu$ and $\nu$ as $\mu=\eta+c\delta_q$ and $\nu=\eta+c\delta_{q'}$ for some non-negative measure $\eta$ and $c>0$. On the other hand, since $[q,q']\neq\{q,q'\}$, we can choose a $z\notin\{q,q'\}$ such that $\varrho(q,q')=\varrho(q,z)+\varrho(z,q')$. Using this $z\in X$ we can construct a unit-speed geodesic $\widetilde{\gamma}:[0,T]\to\Wox$ between $\mu$ and $\nu$, passing through $\xi:=\eta+c\delta_z$.  Set $T'=d_1(\mu,\xi)$ and consider
\[   
\widetilde{\gamma}(t):= 
     \begin{cases}
       \eta+\Big(1-\frac{t}{T'}\Big)c\delta_q+\frac{t}{T'}c\delta_z &\quad\text{if}\quad t\in[0,T'], \\
       \eta+\Big(1-\frac{t-T'}{T-T'}\Big)c\delta_z+\frac{t-T'}{T-T'}c\delta_{q'}&\quad\text{if}\quad t\in(T',T]. \\ 
     \end{cases}
\]
An elementary calculation shows that $\widetilde{\gamma}$ is a unit-speed geodesic between $\mu$ and $\nu$ (such that $\widetilde{\gamma}(T')=\xi$), which is of course different from the linear interpolation \eqref{EuclideanAverage}.

To handle case (b) assume that the signed measure $\mu-\nu$ is not of the form $c\delta_q-c\delta_{q'}$ for any $c>0$ and $q,q'\in X$. Let us denote by $\mu'=(\mu-\nu)_+$ and $\nu'=(\mu-\nu)_-$ its positive and negative part, respectively. Note, that since $(\mu-\nu)(X)=0$ we have that $\mu'(X)= \nu'(X)$. According to the assumption, $\supp\mu'\cap\supp\nu'=\emptyset$ and at least one of them (by symmetry, let's say $\mu'$) is not a constant multiple of a Dirac measure. Then we can write $\mu'$ as a sum $\mu'=\mu'_1+\mu'_2$ such that $\mu'_1$ is not a constant multiple of $\mu'_2$. Indeed, choose a set $S$ such that $0<\mu'(S)<\mu'(X)$ and define $\mu'_1$ and $\mu'_2$ as follows: $\mu'_1(A):=\mu'(A\cap S)$  and $\mu'_2(A):=\mu'(A\setminus S)$ for all Borel sets $A\subseteq X$. This decomposition of $\mu'$ also induces a decomposition $\nu'=\nu'_1+\nu'_2$. To see this, let us choose an optimal coupling $\Pi\in C(\mu',\nu')$ and set $\nu'_1(B)=\Pi(S\times B)$, $\nu'_2(B):=\Pi((X\setminus S)\times B)$ for all Borel sets $B\subseteq X$. These decompositions guarantee that we can construct at least two different unit-speed geodesics between $\mu$ and $\nu$. (In words, we can transport the whole $\mu'_1$ first, and then $\mu'_2$, or vice versa.) Set $T'=d_1(\mu'_1,\nu'_1)$, and observe that
$$T=d_1(\mu,\nu)=d_1(\mu'_1,\nu'_1)+d_1(\mu'_2,\nu'_2)$$
according to \eqref{transinv} and the optimality of $\Pi$. Now we leave it to the interested reader, to verify that the curves $\gamma_1$ and $\gamma_2$ defined by 

\[   
\gamma_1(t)= 
     \begin{cases}
       \eta+\mu'_2+\big(1-\frac{t}{T'}\big)\mu'_1+\frac{t}{T'}\nu'_1 &\quad\text{if}\quad t\in[0,T'], \\
       \eta+\nu_1'+\big(1-\frac{t-T'}{T-T'}\big)\mu'_2+\frac{t-T'}{T-T'}\nu'_2 &\quad\text{if}\quad t\in(T',T] \\ 
     \end{cases}
\]
and
\[   
\gamma_2(t)= 
     \begin{cases}
       \eta+\mu'_1+\big(1-\frac{t}{T-T'}\big)\mu'_2+\frac{t}{T-T'}\nu'_2 &\quad\text{if}\quad t\in[0,T-T'], \\
       \eta+\nu_2'+\big(1-\frac{t-(T-T')}{T'}\big)\mu'_1+\frac{t-(T-T')}{T'}\nu'_1 &\quad\text{if}\quad t\in(T-T',T] \\ 
     \end{cases}
\]
are two different unit-speed geodesics connecting $\mu$ and $\nu$, a contradiction. This finishes the proof of (i)$\Longleftrightarrow$(ii)$\Longleftrightarrow$(iii).

Finally, in order to prove the second part of the statement, we have to show that if (i) holds with the stronger assumption $q\sim q'$, then the unique unit-speed geodesic segment between $\mu$ and $\nu$ cannot be extended in any direction if and only if $\eta=0$.

Assume first by contradiction that $\mu=\eta+c\delta_q$ and $\nu=\eta+c\delta_{q'}$ where $q\sim q'$, but $\eta\neq0$. Then at least one of $\mu$ and $\nu$ is not a Dirac measure. By symmetry, we can assume that $\nu$ is not a Dirac measure. Then the geodesic can be extended towards $\delta_{q'}$. Indeed, set $T'=d_1(\mu,\delta_{q'})$, and observe that the curve
\[   
\widetilde{\gamma}(t)= 
     \begin{cases}
       \eta+c\Big(\big(1-\frac{t}{T}\big)\delta_q+\frac{t}{T}\delta_{q'}\Big) &\quad\text{if}\quad t\in[0,T], \\
       c\delta_{q'}+\Big(1-\frac{t-T}{T'-T}\Big)\eta+\frac{t-T}{T'-T}(1-c)\delta_{q'} &\quad
       \text{if}\quad t\in(T,T'] \\ 
     \end{cases}
\]
is a unit-speed geodesic segment connecting $\mu$ and $\delta_{q'}$, such that $\widetilde{\gamma}(T)=\nu$, a contradiction. To see the converse direction, assume that $\eta=0$, or equivalently, $\mu=\delta_q$ and $\nu=\delta_{q'}$ where $q\sim q'$. Then we have $T=d_1(\mu,\nu)=d_1(\delta_q,\delta_{q'})=\varrho(q,q')$, and the unique geodesic segment between $\delta_q$ and $\delta_{q'}$ is
\begin{equation}
    \gamma:[0,T]\to\mathcal{W}_1(X),\qquad\gamma(t)=\Big(1-\frac{t}{T}\Big)\delta_q+\frac{t}{T}\delta_{q'}. 
    \end{equation}
We have to show that $\gamma$ cannot be extended in any direction. That is, for all $\varepsilon>0$ there is no unit-speed geodesic segment $\gamma_1:[-\varepsilon,T]\to\Wohn$ such that $\gamma_1|_{[0,T]}=\gamma$, and similarly, there is no geodesic segment $\gamma_2:[0,T+\varepsilon]\to\Wohn$ such that $\gamma_2|_{[0,T]}=\gamma$. We present the argument for $\gamma_1$, the case of $\gamma_2$ is similar. Set $\xi:=\gamma_1(-\varepsilon)$ and recall that the triple $\gamma_1(-\varepsilon)=\xi,\gamma_1(0)=\delta_q, \gamma_1(T)=\delta_{q'}$ satisfies the triangle inequality with equality:
\begin{equation}
d_1(\xi,\delta_{q'})=d_1(\xi,\delta_q)+d_1(\delta_q,\delta_{q'}).
\end{equation}
Using that $1=\int_{X}1~\mathrm{d}\xi(r)$, we have $d_1(\delta_q,\delta_{q'})=\varrho(q,q')=\int_{X}\varrho(q,q')\mathrm{d}\xi(r)$, and thus we can write the above equality as
\begin{equation}
\int_{X}\varrho(r,q')~\mathrm{d}\xi(r)=\int_{X}\varrho(r,q)~\mathrm{d}\xi(r)+\int_{X}\varrho(q,q')~\mathrm{d}\xi(r),
\end{equation}
or equivalently,
\begin{equation}
    \int_X \ler{\varrho(r,q)+\varrho(q,q')-\varrho(r,q')} \dd \xi(r)=0.
\end{equation}
As $\varrho(r,q)+\varrho(q,q')-\varrho(r,q') \geq 0$ by the triangle inequality, this means that for $\xi$-almost every $r\in X$ we have
\begin{equation}
\varrho(r,q')=\varrho(r,q)+\varrho(q,q').
\end{equation}
According to $q\sim q'$ this implies that $\supp(\xi)\subseteq\{q,q'\}$, and thus $\gamma_1(-\varepsilon)=\gamma_1(t)$ for some $t\in[0,T]$, a contradiction.
\end{proof}
Now we are ready to prove the main result of the paper.\\

\noindent\textbf{Proof of Theorem 1.1.} Let $G$ be a Carnot group with a horizontally strictly convex norm $N_{\bG}$, and denote by $d$ the metric induced by $N_{\bG}$. We want to prove that the Wasserstein space $\Wog$ over $(\bG,d)$ is isometrically rigid. The first step is to show that the $\Phi$-image of pairs of the form $\{\delta_q,\delta_{q'}\}$ where $q\sim q'$ is again a pair of Dirac measures.

We note here that for elements of a Carnot group with a horizontally strictly convex norm, the relation $q \sim q'$ defined by \eqref{eq:sim-def} is equivalent to the a priori weaker condition $[q,q']=\{q,q'\}.$ Moreover, these equivalent conditions have clear and transparent characterization: $q \sim q'$ if and only if $[q,q']=\{q,q'\}$ if and only if $q^{-1}\cdot q'$ is \emph{not} horizontal. Indeed, if $q^{-1}\cdot q'$ is horizontal, then
$$
\{q,q'\} \subsetneq \left\{ q \cdot \delta_{\alpha}(q^{-1} \cdot q') \, \middle| \, \alpha \in [0,1]\right\} \subseteq [q,q']
$$
as
$$
d(q,q \cdot \delta_{\alpha}(q^{-1} \cdot q'))=\alpha N_{\bG}(q^{-1} \cdot q')
$$
and
$$
d(q',q \cdot \delta_{\alpha}(q^{-1} \cdot q'))=(1-\alpha) N_{\bG}(q^{-1} \cdot q'),
$$
and consequently, $q \nsim q'.$ On the other hand, if $q \nsim q'$ then there exists an $r \notin \{q,q'\}$ such that $r, q$ and $q'$ saturate the triangle inequality --- that is, one of the three equations in \eqref{eq:sim-def} holds true. We can assume without too much loss of generality that
$
d(r, q')=d(r,q)+d(q,q')
$
as the two other cases are very similar. In this case we get that $N_{\bG}(r^{-1} \cdot q)+N_{\bG}(q^{-1} \cdot q')=N_{\bG}(r^{-1} \cdot q).$ By the horizontal strict convexity of $N_{\bG},$ see Definition \ref{def:hscn}, we immediately get that $r^{-1}\cdot q$ and $q^{-1} \cdot q'$ lie on the same horizontal line through the origin, in particular, $q^{-1} \cdot q'$ is a horizontal vector.

\noindent\underline{Step 1.} Let us take a pair $\{\delta_q,\delta_{q'}\}$ where $q\sim q'$, and set $T:=d(q,q')>0$. According to Lemma \ref{KeyLemma}, there is only one unit-speed geodesic $\gamma:[0,T]\to\Wog$ connecting $\delta_q$ and $\delta_{q'}$, and this geodesic cannot be extended in any directions. Since an isometry is a bijection, we obtain that there is only one unit-speed geodesic between $\Phi(\delta_q)$ and $\Phi(\delta_{q'})$ as well. Again, using Lemma \ref{KeyLemma}, this means that there exists a non-negative measure $\eta$, a constant $0<c\leq 1$ and two points $r,r'\in X$ satisfying $[r,r']=\{r,r'\}$ such that $\Phi(\delta_q)=\eta+c\delta_r$ and $\Phi(\delta_{q'})=\eta+c\delta_{r'}$. In fact, since the norm is horizontally strictly convex, we also have $r\sim r'$, and thus we know that $\eta=0$. Indeed, assume indirectly that $\eta\neq 0$. Then the geodesic segment between $\Phi(\delta_q)$ and $\Phi(\delta_{q'})$ could be extended according to the second part of Lemma \ref{KeyLemma}, and the pre-image of this geodesic segment would be an extension of $\gamma$, a contradiction.

\noindent\underline{Step 2.} In the second step we show that there exists an isometry $\psi:\bG\to\bG$ such that $\Phi(\delta_q)=\delta_{\psi(q)}$ for all $q\in\bG$. Since $N_{\bG}$ is horizontally strictly convex, with a non-horizontal perturbation for any $q\in \bG$ we can find a $q'\in \bG$ such that $q\sim q'$.
Since $\Phi$ is bijective, this implies that $\Phi$ maps the set of Dirac masses onto itself. Indeed, it is clear by Step 1 that $\Phi\ler{\delta_q}$ is a Dirac mass for any $q \in \bG,$ and for any $r \in \bG$ there is a $\mu \in \cW_1(\bG)$ such that $\Phi(\mu)=\delta_r$ as $\Phi$ is surjective. Moreover, $\mu$ is a Dirac mass because $\mu=\Phi^{-1}\ler{\delta_r},$ and $\Phi^{-1}$ is an isometry of $\cW_1(\bG)$ which sends Dirac masses to Dirac masses by Step 1. Setting $\psi$ as $\Phi(\delta_q):=\delta_{\psi(q)}$ we obtain a map $\psi:\bG\to\bG$, which is in fact an isometry of $\bG$. Indeed, the distance preserving property follows from the fact that $\dwo(\delta_q,\delta_r)=d(q,r)$ for all $q,r\in\bG,$ and the surjectivity of $\psi$ follows from the fact that $\Phi$ maps the set of Dirac masses onto itself.

\noindent\underline{Step3.} Since $\Phi=\psi_{\#}$ if and only if $\Phi\circ\ler{\psi_{\#}}^{-1}= \mathrm{Id},$ we can assume without loss of generality that $\psi(q)=q$ for all $q\in\bG$. Our aim under this assumption will be to show that $\Phi(\mu)=\mu$ for a dense subset of $\Wog$. As $\Phi$ is continuous, this will finish the proof. Let us introduce the notation \begin{equation}\label{Fnull}
\mathcal{F}_{\sim}(\bG):=\Big\{\sum_{j=1}^n\lambda_j\delta_{q_j}\,\Big|\,n\in\mathbb{N},\,q_j\sim q_l\,\mbox{for all}\,1\leq j<l\leq n\Big\}
\end{equation}
and show that $\mathcal{F}_{\sim}(\bG)$ is dense in $\Wog$. Here again, we can take advantage of the horizontal strict convexity of the norm. Namely that for all finite subsets $\{q_i\}_{i\in I}\subseteq \bG$ ($|I|<\infty$) and for all $\varepsilon>0$ with small non-horizontal perturbations taken step-by-step we can find a finite subset $\{q_i^{\varepsilon}\}_{i\in I}\subseteq \bG$ such that
\begin{equation}\label{Prop2a}
d(q_i,q_i^{\varepsilon})<\varepsilon\quad\mbox{for all}\quad i\in I,
\end{equation}
and
\begin{equation}\label{Prop2b}
q_i^{\varepsilon}\sim q_j^{\varepsilon}\quad\mbox{for all}\quad i,j\in I.
\end{equation}
 Indeed, these assumptions imply that for any $\widetilde{\varepsilon}>0$ and for any $\xi=\sum_{j=1}^n\lambda_j\delta_{q_j}$ we can find an $\varepsilon>0$ and $\{q_i^{\varepsilon}\}_{i=1}^n$ satisfying \eqref{Prop2a} and \eqref{Prop2b} such that the measure $\xi':=\sum_{j=1}^n\lambda_j\delta_{q_j^{\varepsilon}}$ belongs to $\mathcal{F}_{\sim}(\bG)$ and $\dwo(\xi,\xi')<\widetilde{\varepsilon}$. Therefore, $\mathcal{F}_{\sim}(\bG)$ is dense in $\mathcal{F}(\bG)$ and thus in $\Wog$.

What remains to be proven is that $\Phi$ fixes all elements of $\mathcal{F}_{\sim}(\bG)$. 
We already know that $\Phi$ fixes all Dirac measures. Fix a $k\in\mathbb{N}$ and suppose we proved the statement for all elements of $\mathcal{F}_{\sim}(\bG)$ that are supported on at most $k$ points. Take a measure $\xi=\sum_{j=1}^{k+1}\lambda_j\delta_{q_j}\in\mathcal{F}_{\sim}(\bG)$ supported on $k+1$ points. We can write $\xi$ as $$\xi=(1-\lambda)\cdot\xi_1+\lambda\cdot\xi_2$$ with some $0<\lambda<1$ and $\xi_1,\xi_2\in\mathcal{F}_{\sim}(\bG)$ whose supports are sets with $k$ elements. Indeed, $$\xi_1:=\sum_{j=1}^{k-1}\lambda_j\delta_{q_j}+(\lambda_k+\lambda_{k+1})\delta_{q_k},\quad\xi_2:=\sum_{j=1}^{k-1}\lambda_j\delta_{q_j}+(\lambda_k+\lambda_{k+1})\delta_{q_{k+1}},$$ and $\lambda:=\frac{\lambda_{k+1}}{\lambda_k+\lambda_{k+1}}.$ Observe that $\xi_1$ and $\xi_2$ satisfies the conditions of Lemma \ref{KeyLemma} (i) with 
\begin{equation*}\eta=\sum_{j=1}^{k-1}\lambda_j\delta_{q_j},\quad c=\lambda_k+\lambda_{k+1},\quad q=q_k,\quad q'=q_{k+1}.\end{equation*}
Using Lemma \ref{KeyLemma} we know on the one hand that $M_{\lambda}(\xi_1,\xi_2)=\{\xi\}$, on the other hand the set $M_{\lambda}(\Phi(\xi_1),\Phi(\xi_2))$ is just the $\Phi$-image of $M_{\lambda}(\xi_1,\xi_2)$. Combining these observations with $\Phi(\xi_1)=\xi_1$ and $\Phi(\xi_2)=\xi_2$ we get $\{\xi\}=M_{\lambda}(\xi_1,\xi_2)=M_{\lambda}(\Phi(\xi_1),\Phi(\xi_2))=\{\Phi\ler{\xi}\}$. The proof is complete.\\

We close this section with a brief remark on the proof.

\begin{remark}
The scheme of the proof of Theorem \ref{maintheorem} works in the following abstract setting as well. Assume that $(X,\varrho)$ is metric space where (1) for every $q\in X$ there exists a $q'\in X$ such that $q\sim q'$, and (2) for all finite subsets $\{q_i\}_{i\in I}\subseteq X$ ($|I|<\infty$) and for all $\varepsilon>0$ there exists a $\{q_i^{\varepsilon}\}_{i\in I}\subseteq X$ such that (2/a) $\varrho(q_i,q_i^{\varepsilon})<\varepsilon$ for all $i\in I$, and (2/b) $q_i^{\varepsilon}\sim q_j^{\varepsilon}$ for all $i,j\in I$. A prominent example of such a metric space is the unit sphere $\mathbb{S}^n$ endowed with the distance inherited from the ambient Euclidean space $\R^{n+1}$ (we note that the isometric rigidity of the \emph{quadratic} Wasserstein space $\mathcal{W}_2\ler{\mathbb{S}^n}$ has recently been proven in \cite{GHTV-spheres-JMAA} for any finite dimension $n.$) Indeed, we have $x \sim x'$ for any distinct points $x, x' \in \mathbb{S}^n.$  However, it is important to remark that these conditions are only sufficient, and are very far from being necessary. For example, the Wasserstein space $\mathcal{W}_1(\R)$ is isometrically rigid, but there are no points $q,q'\in\mathbb{R}$ such that $q\sim q'$.
\end{remark}

\section{Horizontal strict convexity of the Hebisch-Sikora norm}

In this section, we prove Theorem \ref{existence-shc-norms} by verifying the horizontally strict convexity property of the Hebisch-Sikora norm. We state this result as the following:

\begin{theorem}\label{T:HSCN}
Let $(\bG, \cdot)$ be a Carnot group. Then there exists $r_0>0$ such that for all $0<r<r_0$ the function $N_{HS}$ defined by \eqref{HS} is a horizontally strictly convex norm. 
\end{theorem}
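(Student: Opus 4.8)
The plan is to exploit the defining feature of the Hebisch--Sikora gauge: its unit sphere $\{q : N_{HS}(q)=1\}$ is \emph{exactly} the Euclidean sphere $\partial B(0,r)$, which is strictly convex in $\R^n$. Throughout I take $0<r<r_0$ small enough that $N_{HS}$ is a homogeneous norm, which is precisely the theorem of Hebisch and Sikora recalled above; in particular the subadditivity $N_{HS}(q\cdot q')\le N_{HS}(q)+N_{HS}(q')$ is already available, so the entire task is to analyse its equality case and match it with Definition \ref{def:hscn}.

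The first ingredient is an exact and elementary projection estimate. Since $N_{HS}(q)=t$ is equivalent to $\sum_i t^{-2\sigma_i}q_i^2=r^2$, and $\sigma_i=1$ holds precisely for the first-layer indices $i\le n_1$, discarding the nonnegative higher-layer terms yields
\[
|\pi(q)|_E \le r\,N_{HS}(q),
\]
where $\pi\colon\bG\to V_1\cong\R^{n_1}$ is the horizontal projection and $|\cdot|_E$ the Euclidean norm; moreover equality holds if and only if $q$ is horizontal. Because the first-layer coordinates are additive, $\pi$ is a group homomorphism onto $(\R^{n_1},+)$, and on horizontal vectors $N_{HS}$ coincides with $\tfrac1r|\cdot|_E$. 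These facts will ultimately convert the rigidity conclusion into the equality case of the Euclidean triangle inequality on $V_1$.

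Next I would reduce the saturation to a boundary configuration using that the dilations $\delta_\lambda$ are group automorphisms with $N_{HS}(\delta_\lambda x)=\lambda N_{HS}(x)$. Given $q,q'\ne e$ with $N_{HS}(q\cdot q')=N_{HS}(q)+N_{HS}(q')=:c$, applying $\delta_{1/c}$ and then rescaling each factor to its unit-sphere representative produces $u,v\in\partial B(0,r)$ and $s\in(0,1)$ with
\[
\delta_s u \cdot \delta_{1-s} v =: p \in \partial B(0,r), \qquad |u|_E=|v|_E=|p|_E=r,
\]
and it suffices to show this forces $u=v$ horizontal (whence $q,q'$ lie on a common horizontal line through the origin). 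I would then estimate $|p|_E^2$ coordinatewise. The horizontal coordinates add, so $\pi(p)=s\,\pi(u)+(1-s)\,\pi(v)$ is an exact convex combination; a higher-layer coordinate reads $p_i=s^{\sigma_i}u_i+(1-s)^{\sigma_i}v_i+Q_i(\delta_s u,\delta_{1-s}v)$. Ignoring the $Q_i$ for a moment, the resulting ``abelian'' point $p^{\mathrm{ab}}$ satisfies $|p^{\mathrm{ab}}|_E^2\le r^2$ by coordinatewise convexity; tracking equality, the strict gaps $s^{\sigma_i}<s$ and $(1-s)^{\sigma_i}<1-s$ (valid since $\sigma_i\ge2$, $s\in(0,1)$) force every higher-layer coordinate of $u$ and $v$ to vanish, and Euclidean strict convexity on $V_1$ then forces $\pi(u)=\pi(v)$ — exactly the desired conclusion. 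This also identifies the degenerate set $\{\hat u=\hat v\text{ horizontal}\}$ and a quadratic ``deficit'' $\Psi\ge0$ vanishing precisely there.

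The one genuine difficulty is the group-law surplus $Q_i$ (from the Baker--Campbell--Dynkin--Hausdorff formula), which can only increase the vertical coordinates and so competes against the deficits above. Here I would use the smallness of $r$: writing $u=r\hat u$, $v=r\hat v$ with $\hat u,\hat v$ on the Euclidean unit sphere, each $Q_i$ is a sum of products of two lower-layer coordinates and hence scales like $r^2$, giving
\[
|p|_E^2 - r^2 = -r^2\,\Psi(\hat u,\hat v,s) + r^3\,R(\hat u,\hat v,s,r),
\]
with $R$ bounded on the compact parameter set. The crux — and the main obstacle — is that the relevant part of $R$ also vanishes at the degenerate set: the $Q_i$ are antisymmetric bilinear forms vanishing when $\hat u=\hat v$ is horizontal, and there the leading vertical coefficients $s^{\sigma_i}\hat u_i+(1-s)^{\sigma_i}\hat v_i$ vanish as well, so $R$ vanishes to the \emph{same} (second) order as $\Psi$. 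A naive ``small $r$'' bound is therefore insufficient; one must show $\Psi$ dominates $r\,R$ uniformly, which I would do by a {\L}ojasiewicz-type quadratic lower bound: $\Psi$ controls the squared distance to the degenerate set through two independent coercive mechanisms (the $s^{\sigma_i}<s$ gaps penalising nonzero vertical components, and Euclidean strict convexity penalising $\pi(\hat u)\ne\pi(\hat v)$), with the common $s(1-s)$ weights matching those carried by $R$, so that by compactness $\Psi-rR>0$ off the degenerate set once $r_0$ is chosen small. This makes $|p|_E^2<r^2$ strictly away from $\{u=v\text{ horizontal}\}$, establishing the equality case and hence horizontal strict convexity.
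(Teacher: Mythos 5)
Your route is in essence the paper's route (both refine Hebisch--Sikora's subadditivity proof so as to track the equality case): reduce by homogeneity to unit-sphere representatives $u,v\in\partial B(0,r)$, expand $\|\delta_s u\cdot\delta_{1-s}v\|^2$, extract the abelian convexity deficit --- strict in the vertical coordinates because $s^{\sigma_i}<s$, strict in the horizontal layer by Euclidean strict convexity --- and beat the BCH correction by taking $r$ small. You also correctly diagnose the crux: the correction vanishes to the same second order as the deficit on the degenerate set $\{\hat u=\hat v \text{ horizontal}\}$, so a naive $O(r^3)$ bound is insufficient. But precisely at this decisive point your argument stops being a proof. The claimed domination $\Psi-rR>0$ off the degenerate set cannot follow ``by compactness'': $\Psi$ and the relevant part of $R$ vanish on the same set (and both degenerate as $s\to0,1$), so the infimum of $\Psi-rR$ over the complement of that set is $0$ and compactness yields nothing; what is actually needed is a pointwise comparison of the form $|R|\le C\,\Psi$ with weights matching simultaneously in $s$ and in the distance to the degenerate set, and you assert this rather than establish it. That comparison is exactly where the work lies, and it is what the paper supplies through two explicit structural bounds: \eqref{P1}, which makes the horizontal--horizontal part of the correction vanish \emph{linearly} in the angular distance $\bigl\|\frac{x_1}{\|x_1\|}-\frac{x_2}{\|x_2\|}\bigr\|$, matched via the exact identity \eqref{identity} against the quadratic deficit $\|x_1\|\,\|x_2\|\,\bigl\|\frac{x_1}{\|x_1\|}-\frac{x_2}{\|x_2\|}\bigr\|^2$ with a spare factor $\|x_1\|\,\|x_2\|\le r^2$; and \eqref{P2}, which forces every remaining term to carry a vertical factor, matched against the vertical gap quantified as $\frac{t(1-t)}{2}(\|y_1\|+\|y_2\|)$ in the proof of \eqref{split2}.

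A secondary but substantive inaccuracy feeds into this gap: in a Carnot group of step at least $3$ the $Q_i$ are \emph{not} antisymmetric bilinear forms; they are polynomials of homogeneous degree $\sigma_i$ whose terms each contain one antisymmetric factor $(x_ky_h-x_hy_k)$ but may also involve vertical coordinates of both arguments --- this is why the paper needs the two separate estimates \eqref{P1} (purely horizontal arguments) and \eqref{P2} (all terms with a vertical input). Your vanishing claim at the degenerate set survives, but the ``common $s(1-s)$ weights'' bookkeeping is genuinely delicate: for instance the vertical convexity deficit admits only a lower bound of the shape $s(1-s)\bigl(s|\hat u_i|+(1-s)|\hat v_i|\bigr)\bigl(|\hat u_i|+|\hat v_i|\bigr)$, which degenerates near the endpoints faster than $s(1-s)(\hat u_i^2+\hat v_i^2)$, and one must check that the cross terms $2\langle p^{\mathrm{ab}},Q\rangle$ carry precisely the matching factor $s^{\sigma_i}|\hat u_i|+(1-s)^{\sigma_i}|\hat v_i|$ for this to close. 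The paper performs this matching by a Cauchy--Schwarz splitting into \eqref{split1} and \eqref{split2} and obtains the explicit threshold $r_0=\min\bigl\{1,\tfrac{2}{\sqrt5\,C_1},\tfrac{1}{6C_2}\bigr\}$. In short: your plan is the right plan --- it is the paper's plan --- but the central domination step is a claim, not a proof, and completing it honestly amounts to proving \eqref{P1} and \eqref{P2} and carrying out the split.
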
 
 
\begin{proof}
Let us note, that the homogeneity of $d_{HS}$ as well as properties i) and ii) in Definition \ref{norm} follow immediately from the definition of $d_{HS}$. The triangle inequality has been proven in \cite{HS}, and our non-trivial task here is to characterize the case of equality in the triangle inequality iii)
\begin{equation} \label{triangle}
N_{HS}(q_1\cdot q_2) \leq N_{HS}(q_1) + N_{HS}(q_2) ,
\end{equation}
in order to prove the horizontal strict convexity of this norm. This needs a refined analysis. 
Although our proof is an adaptation of the main idea of the proof of Theorem 2 from \cite{HS}, we need to make a more careful analysis such that in addition to the proof of the triangle inequality we can study the equality case in \eqref{triangle} to prove horizontal strict convexity.

Let us notice first that by the homogeneity of $N_{HS}$ \eqref{triangle} can be written equivalently as 
$$N_{HS}\Big(\delta_{\frac{1}{N_{HS}(q_1) + N_{HS} (q_2)}}(q_1 \cdot q_2)\Big) \leq 1.$$
Using the relation $\delta_{t_1t_2} = \delta_{t_1} \delta_{t_2}$ we can write the above inequality as 
$$N_{HS}\Big(\delta_{\frac{N_{HS}(q_1)}{N_{HS}(q_1) + N_{HS} (q_2)}}\big(\delta_{\frac{1}{N_{HS}(q_1)}}(q_1)\big) \cdot \delta_{\frac{N_{HS}(q_2)}{N_{HS}(q_1) + N_{HS} (q_2)}}\big(\delta_{\frac{1}{N_{HS}(q_2)}}(q_2)\big) \Big)\leq 1.$$

Denoting by $p_1= \delta_{\frac{1}{N_{HS}(q_1)}}(q_1)$ and $p_2= \delta_{\frac{1}{N_{HS}(q_2)}}(q_2)$ and 
$t= \frac{N_{HS}(q_1)}{N_{HS}(q_1) + N_{HS} (q_2)}$
we see that $t\in (0,1)$ and 
\begin{equation} \label{boundary}
N_{HS}(p_1) = N_{HS}(p_2) = 1
\end{equation}
and the above inequality is written equivalently as 
\begin{equation} \label{equiv}
N_{HS}(\delta_t(p_1) \cdot \delta_{1-t}(p_2)) \leq 1. 
\end{equation}
In conclusion we see that \eqref{triangle} becomes equivalent to \eqref{equiv} subjected to \eqref{boundary}. Moreover, the equality case in \eqref{triangle} becomes equivalent to the equality case in \eqref{equiv} (subject to \eqref{boundary}).

Let us notice that by definition the unit ball centered in $0\in \bG$ with respect to $d_{HS}$ coincides as a set with the Euclidean ball $B(0,r) \subset \R^n$ and also, that  the unit sphere with respect to $d_{HS}$ coincides with 
$\partial B(0,r)$. By this observation \eqref{equiv} (subjected to \eqref{boundary}) can be formulated equivalently as follows: We have to prove that there exists $r_0>0$ such that for $0<r<r_0$, it follows that if $p_1, p_2 \in \R^n$ satisfy the property that $||p_1||^2 = ||p_2||^2 =  r^2$ then 
\begin{equation} \label{euclidean}
|| \delta_t(p_1)\cdot \delta_{1-t}(p_2) ||^2 \leq r^2,
\end{equation}
for any $t\in (0,1)$. Moreover, we have to show that equality in \eqref{euclidean} can happen if and only if 
\begin{equation} \label{HC}
p_1= p_2 = (x,0) \ \ \text{for some} \ \ x\in \R^{n_1}, ||x||^2 = r^2.
\end{equation}
It is clear that the inequality \eqref{euclidean} is saturated if $p_1$ and $p_2$ are given as in \eqref{HC}. From now on, we work on proving that equality in \eqref{euclidean} can happen only if \eqref{HC} holds.
Here, and in the following we use the same notation $|| \cdot||$ for the Euclidean norm of vectors in various Euclidean spaces of (possibly) different dimensions. 

To continue the proof let us recall from the introduction that we can write an element $p \in \bG$ as $p = (x,y)$ where $x\in \R^{n_1} $ and $y \in \R^{n_2 + \ldots + n_s}$. In this notation the group operation in $\bG$ can be written as 
$$ (x_1,y_1) \cdot (x_2, y_2) = (x_1 +x_2, y_1 + y_2 + P_1(x_1, x_2) + P_2((x_1, y_1), (x_2, y_2)),
$$
where $P_1: \R^{n_1} \times \R^{n_1} \to \R^{n_2+ \ldots + n_s}$ and $P_2: \R^{n} \times \R^n \to \R^{n_2+ \ldots + n_s}$ are mappings with polynomial components in the variables $x_1, x_2$ resp. $(x_1,y_1), (x_2, y_2)$. 
We recall from \cite{HS} that this representation uses the correspondence between the bracket relations in the Lie algebra and the group operation due to the Baker–Campbell–Dynkin–Hausdorff formula. Accordingly, $P_1$ corresponds to the contribution of vector fields in the first layer of the Lie algebra and $P_2$ corresponds to those vector fields with at least one of them coming from higher layers. Based on this, we have two crucial properties (see \cite{HS}) of $P_1$ and $P_2$ that we are going to use in the following: 
There exists a constant $C_1>0$ such that 
\begin{equation} \label{P1}
||P_1(x_1, x_2)|| \leq C_1 ||x_1|| \cdot ||x_2|| \cdot \Big\| \frac{x_1}{||x_1||}- \frac{x_2}{||x_2||}\Big\|
\end{equation}
for all $x_1, x_2 \in \R^{n_1}$ with $||x_1|| \leq 1$ and $||x_2|| \leq 1$. 
There exists a constant $C_2>0$ such that 
\begin{equation} \label{P2}
||P_2(x_1,y_1), (x_2, y_2))|| \leq C_2 (||x_1||\cdot ||y_2|| + ||x_2||\cdot ||y_1|| + ||y_1||\cdot ||y_2|| ),
\end{equation}
for all $(x_1, y_1), (x_2, y_2) \in \R^n$ with $||(x_1,y_1)|| \leq 1 $ and $||(x_2, y_2)|| \leq 1$. 

The following Euclidean norm identity for non-zero vectors $u$ and $v$ will also be used in the course of the proof:
\begin{equation} \label{identity}
|| u+ v||^2 = (||u|| + ||v||)^2 - ||u||\cdot ||v|| \cdot \Big\| \frac{u}{||u||}- \frac{v}{||v||}\Big\|^2.
\end{equation}
After this preparation we are ready to address the proof of \eqref{euclidean}.  Namely, we shall prove that if $r>0$ is small enough, then for given 
$p_1= (x_1, y_1) $ and $p_2= (x_2, y_2)$ satisfying 
$$ ||x_1||^2 +||y_1||^2 = ||x_2||^2 + ||y_2||^2 = r^2$$
it follows that 
\begin{equation} \label{new-euclidean}
|| \delta_t(x_1, y_1) \cdot \delta_{1-t}(x_2, y_2) ||^2 \leq r^2 .
\end{equation}
Moreover, we shall show that equality in \eqref{new-euclidean} holds if and only if $p_1=p_2= (x,0)$ for some 
$x\in \R^{n_1}$ with $||x||^2 = r^2$.

Using the expression of the group law with  $P_1$ and $P_2$ we can write \eqref{new-euclidean} in the form 
$$ || (t x_1 + (1-t)x_2, \delta_t(y_1) + \delta_{1-t}(y_2) + P_2(\delta_t(x_1, y_1), \delta_{1-t}(x_2, y_2)) +P_1(tx_1, (1-t)x_2)||^2 \leq r^2.$$ 
Introducing the notation 
$$ w= \delta_t(y_1) + \delta_{1-t}(y_2) + P_2(\delta_t(x_1, y_1), \delta_{1-t}(x_2, y_2)),$$
the above relation can be written as 
\begin{equation} \label{expansion}
 ||tx_1 + (1-t)x_2||^2 + ||P_1(tx_1, (1-t)x_2)||^2 +2\langle w, P_1(tx_1, (1-t)x_2)\rangle + ||w||^2 \leq r^2,
 \end{equation} 
where the notation $\langle\cdot , \cdot \rangle$ stands for the usual scalar product. 
By Young's inequality we have 
$$2\langle w, P_1(tx_1, (1-t)x_2)\rangle= t(1-t) 2 \langle w, P_1(x_1, x_2)\rangle \leq t(1-t)(||w||^2 + ||P_1(x_1, x_2)||^2).$$

Therefore the inequality \eqref{expansion} ( and thus  \eqref{new-euclidean})  will follow from 
\begin{equation} \label{readytosplit}
||tx_1 + (1-t)x_2||^2 + t(1 -t) ( 1+ t(1-t)) ||P_1(x_1, x_2)||^2 + (1 + t(1-t)) ||w||^2 \leq r^2.
\end{equation}
We are now ready to split \eqref{readytosplit} into two distinct parts:
\begin{equation}\label{split1}
\begin{split}
||tx_1 + (1-t)x_2||^2 + t(1 -t) ( 1+ t(1-t)) ||P_1(x_1, x_2)||^2 &\leq (t||x_1|| + (1-t)||x_2||)^2 \\
&\leq t||x_1||^2 + (1-t) ||x_2||^2
\end{split}
\end{equation}

and 
\begin{equation}\label{split2}
(1 + t(1-t)) ||w||^2 \leq (t\left|\left|y_1\right|\right| + (1-t)||y_2||)^2 \leq t\left|\left|y_1\right|\right|^2 + (1-t) ||y_2||^2.
\end{equation}
	 
Let us assume for the moment that both \eqref{split1} and \eqref{split2} hold true. By adding up these two inequalities we obtain the following estimate for the left-hand side of \eqref{readytosplit}
\begin{equation} \label{done}
\text{LHS} \eqref{readytosplit} \leq t(||x_1||^2 + ||y_1||^2) + (1-t)(||x_2||^2 + ||y_2||^2) = r^2,
\end{equation}
proving the inequality \eqref{readytosplit} which also concludes the proof of \eqref{new-euclidean}.

 Let us also note that in order to have equality in \eqref{readytosplit} (or \eqref{new-euclidean}) we need that equalities should hold true throughout \eqref{split1} and also \eqref{split2}.

In what follows we shall prove both \eqref{split1} and \eqref{split2} and also analyse the equality cases. In doing so, relations \eqref{P1}, \eqref{P2} and \eqref{identity} will be used in an essential way. Let us start with the first inequality in \eqref{split1}. (The second inequality in \eqref{split1} follows trivially from Jensen's inequality applied to the convex function $x \to ||x||^2$.)
We can write the first inequality in \eqref{split1} as 
 $$  t(1 -t) ( 1+ t(1-t)) ||P_1(x_1, x_2)||^2 \leq (t||x_1|| + (1-t)||x_2||)^2-||tx_1 + (1-t)x_2||^2.$$
 
 Applying \eqref{identity} to $u= tx_1$ and $v=(1-t) x_2$ we see that the above inequality is equivalent to 
 $$  ( 1+ t(1-t)) ||P_1(x_1, x_2)||^2\leq  ||x_1|| \cdot ||x_2||\cdot \Big\| \frac{x_1}{||x_1||} -  \frac{x_2}{||x_2||}\Big\|^2.$$
 
 Using \eqref{P1} we can estimate
 $$ ||P_1(x_1, x_2)||^2 \leq C_1^2 ||x_1||^2 \cdot||x_2||^2 \cdot\Big\| \frac{x_1}{||x_1||} -  \frac{x_2}{||x_2||}\Big\|^2.$$
 
 Now choose $r< \frac{2}{\sqrt{5} C_1}$. Then for all $t\in (0,1) $ we have 
 $$ (1 + t(1-t)) C_1^2 r^2 < 1.$$
 Since $||x_1|| \leq r$  and $||x_2|| \leq r$ this implies 
 $$  (1 + t(1-t)) C_1^2 ||x_1||\cdot ||x_2|| < 1, $$
 and thus the first part of the inequality \eqref{split1}. 
 
 Note that the necessary and sufficient condition to have equality in this inequality is 
 \begin{equation} \label{eq1}
 \frac{x_1}{||x_1||} -  \frac{x_2}{||x_2||}= 0.
\end{equation}  
On the other hand, the necessary and sufficient condition for the equality in the second inequality of 
\eqref{split1} is $||x_1|| = ||x_2||$. We conclude, that the necessary and sufficient condition for equalities in \eqref{split1} is $x_1 = x_2$. 

We can turn now to the proof of \eqref{split2}. Here the estimate \eqref{P2} will be used in an essential manner. We note, that combining the triangle inequality, \eqref{P2} and inequalities $||\delta_t y_1|| \leq t^2 ||y_1||$ and $\|\delta_{1-t}y_2|| \leq (1-t)^2 ||y_2||$ we obtain the following estimate for $||w||$:
$$
||w|| \leq t^2 ||y_1|| + (1-t)^2 ||y_2|| + t(1-t) C_2 (||x_1||\cdot ||y_2|| +  ||y_1||\cdot ||x_2|| +  ||y_1||\cdot ||y_2||).
$$
Choosing $r< \frac{1}{4 C_2}$ and using $||x_1|| \leq r$, $||x_2|| \leq r$ and also  $||y_1|| \leq r$, $||y_2|| \leq r$
we obtain 
\begin{equation*} 
\begin{split}
||w|| &\leq t^2 ||y_1|| + (1-t)^2 ||y_2|| + \frac{t(1-t)}{2}(||y_1|| + ||y_2||)\\ 
&= t ||y_1|| + (1-t) ||y_2|| - \frac{t(1-t)}{2}(||y_1|| + ||y_2||).
\end{split}
\end{equation*}
This inequality can also be written as 
\begin{equation} \label{w-estimate}
||w|| + \frac{t(1-t)}{2}(||y_1|| +||y_2||) \leq t ||y_1|| + (1-t) ||y_2||
\end{equation}  
Observe, that in particular, this implies that 
$$||w|| \leq ||y_1|| + ||y_2||.$$
Thus we infer that 

\begin{align*}
\begin{split}
||w&||^2 (1 + t(1-t))  = ||w||^2 + t(1-t) ||w||^2 \leq ||w||^2 + t(1-t) ||w|| (||y_1|| +||y_2||) \\
&\leq ||w||^2 + t(1-t) ||w|| (||y_1|| +||y_2||) + \left(\frac{t(1-t)}{2}(||y_1|| +||y_2||)\right)^2 \\
&=\left( ||w|| + \frac{t(1-t)}{2}(||y_1|| +||y_2||)\right)^2 \leq (t ||y_1|| + (1-t) ||y_2||)^2.
\end{split}\end{align*}
Note that in the last inequality of the above chain we made use of \eqref{w-estimate}. This proves the second inequality \eqref{split2}. Let us observe that in order to have equality in \eqref{split2} it is necessary to have equalities all throughout in the above chain of estimates. In particular, we must have that 
$$\frac{t(1-t)}{2}(||y_1|| +||y_2||)]=0,$$
as the square of this term has been added to obtain the second inequality. But the only way that this can happen is that $y_1=y_2=0$.

Let us now recall, that the two necessary conditions that we obtained for the equality cases were $x_1=x_2$ and $y_1=y_2=0$. Denoting by $x=x_1=x_2$ this gives that in the equality case we must have $p_1= p_2=(x,0)$ where 
$||x||^2=r^2$ as required.  
This proves the strict horizontal convexity of the norm $N_{HS}$. 

So we obtained that the condition $p_1= p_2=(x,0)$ is a necessary and sufficient condition for the equality in the triangle inequality. If we translate this condition for the starting points $q_1, q_2$ we see that the necessary and sufficient condition for the 
equality 
$$ N_{HS}(q_1\cdot q_2) = N_{HS}(q_1) + N_{HS}(q_2),$$
 is that $q_1=(s_1x,0)$ and $q_2= (s_2x,0)$ for some $s_1, s_2 >0$. That is in fact the original definition of the horizontal strict convexity of $N_{HS}$. 
 
  Let us finally mention that from the proof it is seen that the value of $r_0>0$ in the statement of the theorem can be taken to be 
$$
r_0 = \min \{ 1, \frac{2}{\sqrt{5} C_1}, \frac{1}{4 C_2} \}.
$$
\end{proof}

Using Proposition 6.2 from \cite{H} we obtain as a by-product, the following consequence of Theorem \ref{T:HSCN}:

\begin{corollary} Let $(\mathbf{H}, d_{\mathbf{H}}) $ be a Carnot group with an arbitrary left-invariant homogeneous metric and $(\bG, d_{N_{HS}}) $ be a Carnot group endowed with the left-invariant metric induced by the Hebisch-Sikora norm $N_{HS}$ defined by \eqref{HS} for some $r<r_0$ and hence being horizontally strictly convex. Then all isometric embeddings $\phi: (\mathbf{H}, d_{\mathbf{H}}) \to (\bG, d_{N_{HS}}) $ are affine, i.e. they are compositions of left translations and group homomorphisms. 
\end{corollary}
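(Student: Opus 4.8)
The plan is to deduce the corollary as an immediate consequence of two ingredients: the horizontal strict convexity of $N_{HS}$, just established in Theorem \ref{T:HSCN}, and the general rigidity criterion recorded as Proposition 6.2 in \cite{H}. That proposition asserts that whenever the target Carnot group carries a horizontally strictly convex homogeneous norm, every isometric embedding from a Carnot group equipped with a left-invariant homogeneous metric is affine, i.e.\ it factors as a left translation followed by a group homomorphism. Since Theorem \ref{T:HSCN} guarantees that $N_{HS}$ is horizontally strictly convex for every $r \in (0, r_0)$, the hypothesis of that proposition is satisfied, and the conclusion follows at once. The entire weight of the corollary therefore rests on Theorem \ref{T:HSCN}; what remains is only to indicate how horizontal strict convexity enters the rigidity mechanism.

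To make the mechanism transparent, I would first normalize: composing $\phi$ with the left translation $L_{\phi(e)^{-1}}$, which is an isometry of $(\bG, d_{N_{HS}})$, we may assume $\phi(e) = e$, so that it suffices to prove the normalized map is a group homomorphism. The crucial point is that the metric $d_{N_{HS}}$ \emph{detects} the horizontal directions. Indeed, by Theorem \ref{T:HSCN} together with the characterization following Definition \ref{def:hscn}, a metric segment $[q, q']$ in $(\bG, d_{N_{HS}})$ has nonempty interior precisely when $q^{-1} \cdot q'$ is horizontal, and in that case it coincides with the horizontal segment $\{q \cdot \delta_{\alpha}(q^{-1} \cdot q') \mid \alpha \in [0,1]\}$. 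Since an isometric embedding carries metric segments to metric segments and preserves the property of having nonempty interior, $\phi$ must send the horizontal lines of $\mathbf{H}$ --- which are geodesics for every left-invariant homogeneous metric, via the dilation rays $t \mapsto \delta_t(p)$ for horizontal $p$ --- onto horizontal lines of $\bG$, at unit speed. In particular, $\phi$ restricts to an isometric bijection between the horizontal geodesic rays of $\mathbf{H}$ emanating from $e$ and those of $\bG$ emanating from $e$.

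The genuinely delicate step, which is exactly the content packaged inside Proposition 6.2 of \cite{H}, is to upgrade this rigidity on horizontal rays into the statement that the normalized $\phi$ is linear on the first layer $V_1$ and, beyond that, a group homomorphism on all of $\mathbf{H}$. Here one exploits that the higher layers are generated by iterated brackets of horizontal vectors, that the group law is polynomial and homogeneous with respect to the dilations $\delta_\lambda$, and that preservation of distance along horizontal segments combined with this homogeneity pins down the action of $\phi$ on the whole stratification. I expect this promotion --- from rigidity on horizontal directions to a full homomorphism --- to be the genuine obstacle in a self-contained argument; for the corollary as stated, however, it is entirely outsourced to \cite{H}, and the only point to be checked on our side is the horizontal strict convexity of $N_{HS}$, which is precisely Theorem \ref{T:HSCN}. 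With that in hand, the corollary is immediate.
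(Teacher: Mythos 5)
Your proposal coincides with the paper's own proof, which consists of exactly one step: invoking Theorem \ref{T:HSCN} together with Proposition 6.2 of \cite{H}, with no further argument given. The only caveat is that Hakavuori's criterion is formulated in terms of the geodesic structure of the target (all infinite geodesics being horizontal lines) rather than literally in terms of a horizontally strictly convex norm, but since Theorem \ref{T:HSCN} forces precisely that structure on $(\bG, d_{N_{HS}})$ --- as your discussion of metric segments with nonempty interior correctly indicates --- the deduction is the same.
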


\section*{Acknowledgements}
This work was initiated at the thematic semester  ``Optimal transport on quantum structures'' (Fall 2022) at the Erd\H{o}s Center, R\'enyi Institute, Budapest. Zolt\'an M. Balogh would like to thank the R\'enyi Institute for the invitation to participate in this event and for the kind hospitality. 
We are grateful to Nicolas Juillet and Katrin F\"assler for their inputs and for several discussions on the topic. We thank the anonymous reviewer for his/her detailed comments and insightful suggestions which improved both the content and the presentation of this paper.

\section*{Declarations}
\paragraph*{{\bf Ethical Approval}} Not applicable.

\paragraph*{{\bf Funding}}
Z. M. Balogh is supported by the Swiss National Science Foundation, Grant Nr. {200020\_191978} and {200021-228012}. T. Titkos is supported by the Hungarian National Research, Development and Innovation Office (NKFIH) under grant agreements no. K134944 and no. Excellence\_151232, and by the Momentum program of the Hungarian Academy of Sciences under grant agreement no. LP2021-15/2021. D. Virosztek is supported by the Momentum program of the Hungarian Academy of Sciences under grant agreement no. LP2021-15/2021, by the Hungarian National Research, Development and Innovation Office (NKFIH) under grant agreement no. Excellence\_151232, and partially supported by the ERC Synergy Grant No. 810115.

\paragraph*{{\bf Availability of data and materials}}
Not applicable.

\end{document}